\numberwithin{equation}{section}
\newtheorem{thm}{Theorem}[section]
\newtheorem*{thm*}{Theorem}
\newtheorem*{thmmain*}{MAIN THEOREM}
\newtheorem{lem}[thm]{Lemma}
\newtheorem{cor}[thm]{Corollary}
\newtheorem{prop}[thm]{Proposition}
\newtheorem*{prop*}{Proposition}
\theoremstyle{definition}
\theoremstyle{remark}
\newtheorem{rem}{Remark}[section]
\newcommand{\tref}[1]{Theorem~\ref{#1}}
\newcommand{\cref}[1]{Corollary~\ref{#1}}
\newcommand{\pref}[1]{Proposition~\ref{#1}}
\newcommand{\lref}[1]{Lemma~\ref{#1}}
\def\dim{\mathop{\text{dim}}}
\begin{document}
\title{Polar foliations of symmetric spaces}

\author{Alexander Lytchak}
\address{A. Lytchak, Universit\"at zu K\"oln, Mathematisches Institut, Weyertal 86-90,
50931 K\"oln, Germany}
\email{alytchak\@@math.uni-koeln.de}

\subjclass[2000]{53C20, 51E24}

\keywords{Isoparametric, equifocal, polar action, spherical building}

\begin{abstract}
We prove that a polar foliation of codimension at least three in an irreducible compact symmetric space 
is hyperpolar, unless the symmetric space has rank one. For reducible symmetric spaces of compact type, we
 derive decomposition results for polar foliations.
\end{abstract}

\thanks{ The author was supported   by a  Heisenberg grant of the DFG and by the  SFB  878
{\it Groups, Geometry and Actions}}

\maketitle
\renewcommand{\theequation}{\arabic{section}.\arabic{equation}}

\pagenumbering{arabic}

\section{Introduction}
The following is the most important special case of our results:

\begin{thm} \label{irredthm}
Let $M$  be a simply connected,  irreducible, non-negatively curved   symmetric space,  and let $\mathcal F$ 
be a polar foliation on $M$ of codimension at least 3.
Then either all leaves of $\mathcal F$ are points, or $\mathcal F$ is hyperpolar, or the symmetric space  has rank one.
Moreover,  in the last case, $M$ is not the Cayley plane and the foliation lifts via the Hopf fibration to a polar foliation of the round sphere.
\end{thm}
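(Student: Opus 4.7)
My plan is to split the argument by the rank of $M$. I first assume that $\mathcal{F}$ has at least one leaf of positive dimension (otherwise the first alternative holds), and let $\Sigma$ denote a section: a totally geodesic submanifold of $M$ of dimension $k=\codim(\mathcal{F})\geq 3$ on which the generalized Weyl group $W(\mathcal{F})$ acts as a Coxeter reflection group. In the case $\mathrm{rank}(M)\geq 2$, I would argue that $\Sigma$ must be flat, which is hyperpolarity. The reflection hypersurfaces of $W(\mathcal{F})$ on $\Sigma$ and the infinitesimal data at singular leaves put a thick spherical simplicial structure of rank $k-1\geq 2$ on the unit tangent sphere at a regular point of $\Sigma$, which I would identify with a sub-building of the spherical building of $M$ at infinity. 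The central step is then a rigidity statement: in an irreducible symmetric space of rank $\geq 2$, the only totally geodesic submanifolds carrying such a thick spherical building structure are flat tori. This should follow from Tits' rigidity for spherical buildings of rank $\geq 2$ combined with techniques developed earlier in the paper, and is the principal technical point of the theorem.

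In the rank one case $M$ is one of $S^n$, $\mathbb{CP}^n$, $\mathbb{HP}^n$, or the Cayley plane $\mathbb{OP}^2$. The sphere is trivial. For $\mathbb{CP}^n$ and $\mathbb{HP}^n$ I would construct the lift via the Hopf fibration $\pi\colon S^N\to M$, whose fibers are totally geodesic circles or $3$-spheres. Since the Hopf fibers are always tangent to the leaves of $\pi^{-1}\mathcal{F}$, the pullback is a smooth foliation of $S^N$, and horizontal lifts of sections of $\mathcal{F}$ enlarged by the tangent Hopf directions give totally geodesic submanifolds meeting all leaves orthogonally, exhibiting $\pi^{-1}\mathcal{F}$ as a polar foliation of the round sphere.

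The main obstacle is to exclude $M=\mathbb{OP}^2$. The analogous bundle $S^{15}\to \mathbb{OP}^2$ has fiber $S^7$ and is not a principal Riemannian bundle in the sense needed for the Hopf argument to yield a polar lift, so excluding the Cayley plane requires a direct proof that no nontrivial polar foliation of codimension $\geq 3$ exists on $\mathbb{OP}^2$. My strategy would be to study the infinitesimal polar foliation at a point, which is determined by a polar representation of the isotropy group $\mathrm{Spin}(9)$ on $\mathbb{R}^{16}$: Dadok's classification of polar representations, combined with the requirement that the infinitesimal data globalize to a smooth polar foliation with singular strata compatible with the Cayley plane's homogeneous structure, should rule out every non-hyperpolar, non-trivial example in codimension $\geq 3$. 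This case-specific exclusion, together with the building rigidity step in the higher rank case, is where I expect the bulk of the technical work to lie.
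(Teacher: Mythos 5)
Your high-level plan (split by rank, use Tits' building theory in higher rank, lift via Hopf in rank one, exclude the Cayley plane) does parallel the paper, but the central step in the higher-rank case rests on a misconception that makes the argument collapse. You propose to identify the thick spherical structure coming from the reflection walls of $W(\mathcal F)$ with a sub-building of ``the spherical building of $M$ at infinity.'' But $M$ is a non-negatively curved, hence compact, irreducible symmetric space: it has no ideal boundary and no building at infinity. The building in the paper is not a sub-object of anything attached to $M$ a priori. It is built globally and from scratch, as the universal cover of the dual leaf equipped with the \emph{horizontal metric} $d^{hor}$ (the length metric of broken horizontal geodesics). Locally this space is a spherical building by Immervoll--Thorbergsson; the local-to-global theorem of Charney--Lytchak (valid only in dimension $\geq 3$, which is exactly where the codimension hypothesis enters) makes the universal cover a genuine building; and only \emph{then} does one feed this abstract building into Burns--Spatzier and Tits to learn that it is a compact topological building, hence a sphere, and that $M$ is the quotient of that sphere by a compact group acting freely and linearly. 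The conclusion ``rank one'' comes out at the very end from recognizing $M$ topologically as a sphere or projective space --- not from a rigidity statement about totally geodesic submanifolds being flat, which you would still need to formulate and prove. Moreover, the paper does not prove ``rank $\geq 2$ forces sections flat'' directly; it first runs a product decomposition using Wilking's dual foliation to reduce to the case where sections have constant positive curvature, and only then shows rank one. Your unit-tangent-sphere at a regular point of $\Sigma$ is also only a single apartment, not a building: the thick building appears precisely because many sections meet along singular strata, which your local picture does not see.

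Two smaller problems. In the Hopf lift for $\mathbb{CP}^n$ and $\mathbb{HP}^n$, the lifted foliation has the same codimension $k$, so its sections are $k$-dimensional great spheres; ``horizontal lifts of sections enlarged by the tangent Hopf directions'' would have dimension $k+1$ or $k+3$ and would be tangent to, not orthogonal to, the lifted leaves. The actual reason the lift is polar is that the common quotient $\Delta = M/\mathcal F = S^N/\hat{\mathcal F}$ has constant curvature $1$, which in turn follows because the sections of $\mathcal F$ must be real projective subspaces (the alternative, projective lines, forces the foliation to be regular, contradicting Lemma~\ref{easyprod}). For the Cayley plane, Dadok's classification of polar representations is not available: the infinitesimal foliation $T_x\mathcal F$ on $\mathbb{R}^{16}$ is a polar singular Riemannian foliation but is not known to be an orbit foliation of a subgroup of $\mathrm{Spin}(9)$, so the homogeneity you implicitly assume is unjustified. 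The paper's exclusion is much simpler and does not touch representation theory: sections of any nontrivial polar foliation of the Cayley plane are necessarily real projective planes $\mathbb{RP}^2$, so the codimension is at most $2$, which is already incompatible with the hypothesis $k\geq 3$.
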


The result confirms a folklore conjecture in the field of polar foliations and actions.  We explain the origin,  ambience and  generalizations of this result below.  But first, we would like to emphasize that the main idea of the proof may be more interesting than the result itself.  Namely, the main step of the proof is 
an application of the famous  theorem of Tits which classifies spherical Tits buildings and shows  that spherical buildings of dimension at least $2$ are homogeneous and of algebraic origin. 
This combinatorial-algebraic theorem, seemingly very far remote 
from the world of Riemannian geometry,  was already used as the main tool in two very important papers \cite{Th2} and \cite{burns}.
Here we provide another application of this classification result along with another theorem of Tits on chamber systems slightly more general than buildings.

 We hope that this approach via the combinatorial  theory of buildings  might be fruitful in the field of polar foliations, providing insights barely attainable by direct differential-geometric means. Our hope is supported by the fact that independently 
 the same idea was applied by F. Fang, K. Grove and G. Thorbergsson in \cite{gro}, to obtain a classification
 of polar actions  on positively curved manifolds.  Moreover,
 this approach gives more weight and interest to  the  exotic 2-dimensional combinatorial objects which has  appeared in the work of Tits on chamber complexes and buildings.  It seems to come as  a surprise in the building community, that such 2-dimensional local
 buildings not coming as quotients of global buildings,  which were discovered in the 80ies
 (\cite{Neu}) and  considered as something extremely bizarre and  exotic, appear naturally as problems and examples in Riemannian geometry.
  We refer the reader to \cite{KrL} for more on these combinatorial objects, finish the advertisement part and come to the introduction.

 A \emph{polar foliation} $\mathcal F$ of a complete $m$-dimensional Riemannian manifold $M$ is a singular Riemannian foliation  (cf. Section \ref{2}) with 
regular leaves of dimension $(m-k)$, such that  each point $x\in M$ is contained in a complete, totally geodesic, immersed submanifold $\Sigma$ of dimension $k$, by definition, the \emph{codimension of the foliation},  that intersects all leaves of $\mathcal F$ orthogonally.   Such a submanifold is called a  \emph{section}
 of $\mathcal F$.    The polar foliation $\mathcal F$
is called \emph{hyperpolar} if one and hence all sections are flat.    If the foliation is given  by the orbit foliation
 of an isometric  action it is called \emph{homogeneous} and the action is called  a \emph{polar action}.

In space forms, the investigation of polar foliations of codimension one has been initiated by Segre and Cartan
and in higher codimensions by Terng (\cite{Te}) under the name of \emph{isoparametric foliations}.   We refer to the excellent surveys  \cite{Th5}, \cite{Th6} and the huge 
list of references therein.  It turns out that polar foliations in Euclidean spaces come from polar foliations on spheres.
Polar foliations of codimension at least two   in round  spheres have  been shown by Thorbergsson  to be homogeneous (if they are "irreducible and full")
and  related to {\it non-positively curved} symmetric spaces and their buildings at infinity (\cite{Th2}).   
On the other hand, in codimension one, there are series of inhomogeneous examples and the classification is still not complete, despite great recent progress in the area (\cite{FKM}, \cite{Stolz}, \cite{Chi}, \cite{immer2}, and \cite{Th6} 
for more references).

The investigation of polar foliations in (from now on always) non-negatively curved symmetric spaces $M$
has been initiated in \cite{TTh1}. It has been shown that, using a Riemannian submersion $\mathcal H \to M$
from a Hilbert space of paths to $M$, one can lift hyperpolar (!) foliations from $M$ to $\mathcal H$.
From this observation one could "understand" all  hyperpolar ("full, irreducible") foliations of codimension at least 2,
by showing that they are homogeneous (\cite{He-Li}, \cite{Ewert}, \cite{Christ}). In irreducible symmetric spaces, such hyperpolar actions have been classified in \cite{K}.

On the other hand, in symmetric spaces of rank one, there are lots of polar foliations (cf. \cite{Pth1}, \cite{Dominguez})
which are never hyperpolar if the codimension is at least two. Motivated by the known examples and confirmed by   the partial classification of polar {\it actions} on irreducible symmetric spaces obtained in  \cite{Kol1}, it has been conjectured that
polar foliations on irreducible symmetric spaces of higher rank are hyperpolar.  Our \tref{irredthm} confirms this conjecture
if the codimension is not equal to two.

In our approach, the irreducibility of $M$ does not play a major role. More important is the irreducibility of the sections, more
precisely of the quotient spaces $M/\mathcal F$.    Without the assumption of irreducibility we prove:

\begin{thm} \label{mainthm}
Let $\mathcal F$ be a polar foliation on  a simply connected non-negatively curved symmetric space $M$.
Then we have a splitting $M=M_{-1} \times M_0\times  M_1\times ... \times M_l$, such that $\mathcal F$
is a direct product of polar foliations $\mathcal F_i$ on $M_i$.  
The foliation $\mathcal F_{-1}$ on $M_{-1}$ is given by the fibers of the projection of $M_{-1}$ onto a direct factor
of $M_{-1}$.
The foliation $\mathcal F_0$ is hyperpolar.
For $i\geq 1$,  the sections of  the foliation $\mathcal F_i$ on $M_i$ have constant positive sectional curvature.
For $i\geq 1$, if the codimension of $\mathcal F_i$ on $M_i$ is at least $3$ then $M_i$  is irreducible and of  rank one; moreover, in this case, the  foliation 
$\mathcal F_i$ lifts to a polar foliation of the round sphere. 
\end{thm}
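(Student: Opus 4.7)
My plan is to analyze a section $\Sigma$ of the polar foliation $\mathcal{F}$, decompose $\Sigma$ via the de~Rham theorem in a manner compatible with the generalized Weyl group $W$, transfer the decomposition to a splitting of $M$ itself, and then invoke \tref{irredthm} on each non-flat irreducible factor.

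First I would fix a section $\Sigma$ through a regular point. Since $\Sigma$ is complete and totally geodesic in the non-negatively curved symmetric space $M$, it is itself a non-negatively curved symmetric space (passing, if necessary, to its universal cover). The generalized Weyl group $W$ of $\mathcal{F}$ acts on $\Sigma$ by isometries with $\Sigma/W\cong M/\mathcal{F}$, and is generated by reflections across the singular strata of $\mathcal{F}|_{\Sigma}$, which are totally geodesic hypersurfaces. Applying de~Rham and regrouping the factors according to the behavior of $W$, one writes $\Sigma=\Sigma_{-1}\times\Sigma_{0}\times\Sigma_{1}\times\cdots\times\Sigma_{l}$, where $\Sigma_{-1}$ collects the de~Rham factors on which $W$ acts trivially, $\Sigma_{0}$ is the Euclidean factor on which $W$ acts as a non-trivial crystallographic reflection group, and the $\Sigma_{i}$ ($i\geq 1$) are the remaining non-flat irreducible factors on which $W$ acts non-trivially. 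Every isometry of a symmetric space permutes de~Rham factors, and reflections across hypersurfaces cannot swap two factors of dimension at least two; after absorbing one-dimensional swapped pairs into $\Sigma_{-1}$ or $\Sigma_{0}$, $W$ preserves this regrouping.

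Next I would verify that each $\Sigma_{i}$ ($i\geq 1$) has constant positive sectional curvature. Because $W$ acts non-trivially on $\Sigma_{i}$, there is a reflection whose fixed locus meets $\Sigma_{i}$ in a totally geodesic hypersurface. A classical rigidity fact states that a simply connected irreducible non-flat non-negatively curved symmetric space admits a totally geodesic hypersurface only if it has constant positive curvature, so each $\Sigma_{i}$ is a round sphere.

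The main technical obstacle is globalizing the splitting of $\Sigma$ to a splitting of $M$. The plan is to decompose $\nu\mathcal{F}|_{\Sigma}$ via the tangent bundles $T\Sigma_{i}$, then propagate this decomposition across $M$ using the horizontal holonomy of $\mathcal{F}$ along leaves; since this holonomy consists of isometries between sections which are equivariant with respect to $W$, it preserves the regrouping above. Combining with the leaf tangent distribution yields a globally defined parallel decomposition of $TM$, which by the global de~Rham theorem integrates to an isometric splitting $M=M_{-1}\times M_{0}\times\cdots\times M_{l}$ into symmetric factors, with $\mathcal{F}$ restricting to a product of polar foliations $\mathcal{F}_{i}$. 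By construction $\mathcal{F}_{-1}$ is the fiber foliation of a projection onto a direct factor of $M_{-1}$ (since $W$ is trivial on $\Sigma_{-1}$), $\mathcal{F}_{0}$ is hyperpolar, and each $\mathcal{F}_{i}$ ($i\geq 1$) has a spherical section. Finally, for $i\geq 1$ with codimension at least three, $M_{i}$ must be irreducible (any further de~Rham splitting would contradict the irreducibility of $\Sigma_{i}$), and then \tref{irredthm} applies: the foliation is neither trivial nor hyperpolar (since $\Sigma_{i}$ is a positive-dimensional sphere), so $M_{i}$ has rank one, is not the Cayley plane, and $\mathcal{F}_{i}$ lifts to the round sphere via the Hopf fibration.
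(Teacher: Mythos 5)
Your overall scaffolding --- decompose the section $\Sigma$ via de Rham compatibly with the Weyl group $W$, then try to globalize and invoke \tref{irredthm} --- superficially resembles the paper, but it fails at exactly the two places where the paper has to work hardest, and both failures are fatal.

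First, the globalization step is where the actual content lies, and your proposal glosses over it. You decompose $\nu\mathcal{F}|_{\Sigma}$ and propose to propagate this around $M$ via horizontal holonomy and then ``combine with the leaf tangent distribution'' to get a parallel decomposition of $TM$. But a decomposition of the normal bundle of the regular leaves together with the leaf tangent distribution only gives you $T\mathcal{F}\oplus\nu_{-1}\oplus\cdots\oplus\nu_{l}$; it tells you nothing about how to distribute the leaf directions $T\mathcal{F}$ among the would-be factors $TM_i$, nor that the result is parallel, nor that it extends across the singular leaves. The paper's resolution is entirely different: it uses Wilking's dual foliation. Proposition~\ref{connected} shows that each dual leaf lies in a totally geodesic direct factor of $M$ (a Lie-triple-system argument specific to symmetric spaces), Corollary~\ref{factor} upgrades this to a global splitting when dual leaves are complete, and Lemma~\ref{postpone} supplies the needed completeness by showing dual leaves are compact when sections have constant positive curvature. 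That compactness lemma in turn requires the horizontal-metric and building machinery of Section~\ref{6}. Nothing in your holonomy argument substitutes for this; in particular, you never produce the totally geodesic submanifolds that actually realize the splitting of $M$.

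Second, and independently, your final step asserts that ``for $i\geq 1$ with codimension at least three, $M_i$ must be irreducible (any further de~Rham splitting would contradict the irreducibility of $\Sigma_i$).'' This inference is invalid: an irreducible totally geodesic submanifold (here a round sphere $\Sigma_i$) can perfectly well sit inside a reducible ambient symmetric space, so the irreducibility of the section does not constrain the ambient de Rham type. In fact, ruling this out is precisely the hard content of the theorem. The paper does \emph{not} deduce irreducibility of $M_i$ from the section; it proves, via the horizontal metric, the local-to-global theorem of Charney--Lytchak, and the Burns--Spatzier/Tits classification of topological buildings (Sections~\ref{6}--\ref{8}, with the reducible-Coxeter case handled by polars and meridians in Section~\ref{7}), that $M_i$ must have the topology of a sphere or projective space and hence be a rank-one irreducible symmetric space. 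Since \tref{irredthm} is itself proved by that same machinery, your proposal effectively assumes the conclusion it is trying to establish for the spherical-section factors. In short, both the product decomposition and the rank-one conclusion rest on tools (dual foliations, the $d^{hor}$ metric, and building theory) that your argument does not supply.
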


 In the special case of \emph{polar action} it is possible to understand the arising additional difficulties in cohomogeneity two. Based on this work, in \cite{KrL} (and, previously, in \cite{KolL}, in the irreducible case),  it is proved that the  additional assumption on the codimension being at least $3$ is redundant.  Thus any  polar action on a non-negatively curved symmetric space is (up to orbit-equivalence) a direct product of hyperpolar actions and of 
 polar actions on spaces of rank $1$.

 The method of proving our main result is inspired by the proof of the  homogeneity result of polar foliation in Euclidean spaces
due to Thorbergsson \cite{Th2}.  We reduce the statement to the case in which the sections have constant curvature $1$. We investigate the horizontal object of our foliation, that is a length metric space
defined by measuring the lengths of broken horizontal geodesics with respect to the foliation.  We use Wilking's results about 
the dual foliation to see that (in the irreducible case) this new metric space is connected.   Since the local structure of this metric space is given by polar foliations on the Euclidean space, this new metric space is locally isometric to some spherical building (possibly
up to one special  case that can be handled directly). Now we
use a theorem of \cite{charney}, stating that, if the codimension $k$ of the foliation (i.e., the dimension of our horizontal object) is at least $3$, this horizontal space is covered by a spherical building.    Moreover,  we use our coarser manifold  topology, to find a coarser compact topology on our building. 
If this building is reducible, one can use direct methods to detect the structure of our symmetric space. In the "main" irreducible case,    we use the theorems of Burns-Spatzier and Tits (\cite{burns}, \cite{Tits})  saying that our building is the building of a simple non-compact real Lie group. In particular, its coarser topology is that of a sphere.
 Then  our manifold turns out to be  the base of a principal  fibration of a sphere. Therefore  it  is homeomorphic  to a projective space. We conclude that our symmetric space  has rank $1$.

Finally, we would like to mention that the case of cohomogeneity $2$ is  different   not only for technical reasons. The main point is that the universal covering of our horizontal space need not be a building (i.e. the  local-global result
from \cite{charney} may fail).  We are aware of only one example in which this problem arises, namely for the polar action
of  $\mathrm {SU} (3) \cdot \mathrm {SU} (3)$ on  the Cayley projective plane $\mathrm {Ca} \mathbb {P}^2$.
In \cite{KrL}, it is shown that this is the only such example 
in the case of polar \emph{actions}, i.e., in the case, when the 
horizontal simplicial object is homogeneous.  Unfortunately, in the general case,
nothing is known about the combinatorial structure of arising objects.

  In  Section \ref{2} we shortly recollect all notions and results needed later in the proof. In Section \ref{3} and 
Section  \ref{4}  we study dual foliations and derive the product decomposition of \tref{mainthm}, reducing 
\tref{mainthm} and \tref{irredthm} to the case where sections have constant positive curvature and the dual foliation has only one leaf.  In Section \ref{6} we introduce our main tool: the horizontal singular metric $d^{hor}$ on our manifold 
$M$ and study its basic properties. It turns out that there are two essentially different cases to be investigated,
depending on whether the spherical Coxeter group in question is reducible or not.  In Section \ref{7} we study the reducible case and apply some basic results of \cite{Nagano}  about special totally geodesic subspaces, called \emph{polars} and
\emph{meridians}, to prove that   our symmetric space $M$ has rank $1$.  In Section \ref{8}, together with 
Section \ref{6}, the heart of the paper, we use the fact the universal covering of our singular metric space $(M, d^{hor})$
is a spherical building.  We define a coarser topology on this space and use the main theorem of \cite{burns}
to prove that this coarser topology is the topology of a sphere.  Then we deduce   that $M$ has rank $1$.
In the final section, we use a simple argument inspired by  \cite{Pth1}, to  describe 
polar foliations on symmetric spaces of rank 1, thus finishing the proof of our main theorems.

\section{Preliminaries} \label{2}
\subsection{Foliations}
We refer to \cite{Wilk}, \cite{LT}, \cite{Lyt} for more on singular Riemannian foliations. Here we just recall the basic notions.
Let $M$ be a  Riemannian manifold.  A singular Riemannian foliation $\mathcal F$ on $M$ is a decomposition of 
$M$ into smooth, injectively immersed submanfolds  $L(x)$, called \emph{leaves}, such that it is a singular foliation and such that
any geodesic starting orthogonally to a leaf remains orthogonal to all leaves it intersects. Such a geodesic is called a
\emph{horizontal geodesic}.  For all $x\in M$,  we denote by $H_x$ the orthogonal complement to the 
tangent space $T_x (L(x))$, and call it the \emph{horizontal space at $x$}.  A leaf  and all of its points are called 
\emph{regular} if it has  maximal dimension.  On the set of regular points, the foliation is locally given by a Riemannian submersion.  The dimension of the regular leaves is called the dimension of the foliation, and their codimension in $M$
is called the codimension of the foliation.

 The foliation is called  \emph{polar} if through any point $x\in M$ one  finds a totally geodesic submanifold whose dimension equals  the codimension of $\mathcal F$ and which  intersects all leaves orthogonally.  This happens if and only if 
the horizontal distribution in the regular part is integrable.   If $M$ is complete, then the totally geodesic submanifolds 
can be chosen to be complete. They are called section of the polar foliation $\mathcal F$. We refer to \cite{Alex}, \cite{Alext}, \cite{Lyt}  for more on polar foliations.

If the foliation $\mathcal F$  is polar and $M$ is simply connected then all leaves are closed. 
 The quotient space (the space of all leaves)
will be denoted by $\Delta$. It comes along with the canonical projection $p:M\to \Delta$ which is a \emph{submetry}.
The quotient $\Delta$ is a \emph{good Riemannian Coxeter orbifold} (\emph{reflectofold}, in terms of  \cite{Davis}).    Moreover, the restriction  $p:\Sigma  \to \Delta$ to
any section $\Sigma$ is a Riemannian branched covering.  Thus $\Delta $ is isometric to $\tilde \Sigma  /\Gamma$, where
$\tilde \Sigma$ is the universal covering of $\Sigma$ and $\Gamma$ is a \emph{reflection group}, i.e., a discrete group of isometries of $\tilde {\Sigma }$ generated 
by reflections at totally geodesic hypersurfaces.

For any point $x\in M$, the singular Riemannian foliation defines an infinitesimal singular Riemannian
foliation $T_x \mathcal F$ on $T_x M$,  that factors as a projection of $T_x M$ to $H_x$ and the restriction
of $T_x \mathcal F $ on $H_x$.   If $\mathcal F$ is polar then $T_x \mathcal F$ is polar and sections of $\mathcal F$ through $x$
are in one-to-one correspondence with sections of $T_x \mathcal F$ through the origin.  Any horizontal geodesic is contained in a section of $\mathcal F$.  Moreover, either the foliation is regular or  there are two sections $\Sigma _{1,2}$ whose intersection $\Sigma _1 \cap \Sigma _2$ is a hypersurface in both sections $\Sigma _{1,2}$.

\subsection{Dual foliation}
The dual foliation $\mathcal F ^ {\#}$ of a singular Riemannian foliation  $\mathcal F$ is defined by letting the leaf $L^ {\#} (x)$  be the set of all points
in $M$ that can be connected with $x$ by a broken horizontal geodesic.  In \cite{Wilk} it is shown  that
$\mathcal F ^{\#}$ is indeed a singular foliation.  The  following important results has been shown in \cite{Wilk} 
(we use slightly weaker formulations, suitable for our aims):

\begin{prop} \label{dual}
Let $M$ be a complete non-negatively curved manifold with a singular Riemannian foliation $\mathcal F$.
Let  $\gamma$ be an $\mathcal F$-horizontal geodesic starting at a point $x\in M$.  Let $W(t) := \nu _{\gamma (t)} L ^{\#} (x)$ denote 
the normal space to the dual leaf $L ^{\# } (x) =L^{\# } (\gamma (t))$  at the point $\gamma (t)$.   Then $W(t)$ is parallel along 
$\gamma$.  Moreover, for all $w\in W(t)$, the sectional curvature $sec (w \wedge \gamma ' (t))$  of the plane spanned by
$w$ and $\gamma ' (t)$ is $0$.  
\end{prop}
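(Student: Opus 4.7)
The plan is to adapt Wilking's original strategy, which exploits the non-negative curvature through a Jacobi-field analysis of variations of $\gamma$ through horizontal geodesics. The first preparatory step is to describe $T_{\gamma(t)} L^{\#}(x)$ infinitesimally. Any smooth one-parameter variation $\gamma_s$ of $\gamma_0 = \gamma$ through $\mathcal{F}$-horizontal geodesics with $\gamma_s(0) \in L^{\#}(x)$ has each $\gamma_s(t)$ in $L^{\#}(x)$, so the Jacobi field $J(t) = \partial_s \gamma_s(t)|_{s=0}$ takes values in $T_{\gamma(t)} L^{\#}(x)$. A basic class of such variations keeps $\gamma_s(0)=x$ fixed and varies the horizontal initial direction, yielding Jacobi fields with $J(0)=0$ and $J'(0) \in H_x$. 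I would first show that, together with $\gamma'(t)$ and with variations moving the basepoint along $L^{\#}(x)$, these span $T_{\gamma(t)} L^{\#}(x)$; this is essentially contained in Wilking's theorem that $\mathcal{F}^{\#}$ is itself a singular Riemannian foliation.

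Next, fix $W_0 \in \nu_{\gamma(0)} L^{\#}(x)$ and let $W(t)$ be its parallel extension along $\gamma$. For a basic Jacobi field $J$ as above, put $\varphi(t) = \langle W(t), J(t)\rangle$. Since $W_0 \perp T_x L^{\#}(x) \supset H_x \ni J'(0)$ and $J(0) = 0$, we have $\varphi(0) = \varphi'(0) = 0$, and the Jacobi equation gives
$$\varphi''(t) \;=\; -\langle R(J(t), \gamma'(t))\gamma'(t), W(t)\rangle.$$
I would then derive $\varphi \equiv 0$ by contradiction combined with Rauch comparison: if $\varphi(t_0) \neq 0$ somewhere, the curve $s \mapsto \gamma_s(t_0)$ lies in $L^{\#}(x)$ and has a non-zero first-order component along $W(t_0)$, while in non-negative curvature Rauch comparison gives just enough control on parallel transport to contradict $W_0 \perp T_x L^{\#}(x)$. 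Running the argument over the full family of admissible $J$ with $J(0)=0$, $J'(0) \in H_x$ --- whose image sweeps out a large horizontal cone by standard Jacobi-field spreading --- then forces the bilinear form $\langle R(\cdot,\gamma')\gamma', W\rangle$ to vanish on $H_{\gamma(t)}$, and in particular $R(W,\gamma')\gamma' = 0$. This is precisely the second assertion $\sec(W(t)\wedge \gamma'(t)) = 0$, and combined with the parallelism of $W$ it confirms that $W(t)$ remains in $\nu_{\gamma(t)} L^{\#}(x)$.

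The main obstacle I foresee is the circular interdependence between the two conclusions: the description of $T_{\gamma(t)} L^{\#}(x)$ at points other than the initial $x$ already requires that $L^{\#}$ has a good local structure, while the parallelism claim is needed to see that normal directions transport well along horizontal geodesics. In practice one is forced into a simultaneous bootstrap --- the parallel-transport statement, the vanishing sectional-curvature statement, and Wilking's foliation result for $\mathcal{F}^{\#}$ are intertwined and established together --- rather than derived one from the other in isolation. The delicate analytic step inside this bootstrap is the Rauch-type estimate quantifying how much a variation through horizontal geodesics can displace the endpoint, which is where non-negativity of curvature enters in an essential way.
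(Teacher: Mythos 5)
The paper does not prove this proposition; it is stated in Section~\ref{2} explicitly as a result of Wilking, quoted from \cite{Wilk} ``in slightly weaker formulations, suitable for our aims.'' There is therefore no internal argument in the paper to compare against --- the burden is delegated entirely to the cited reference.

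As for your sketch, the opening moves are the correct ones and do match Wilking's set-up: describe $T_{\gamma(t)}L^{\#}(x)$ by variational fields of variations through $\mathcal F$-horizontal geodesics, take $W_0\perp T_x L^{\#}(x)$, extend it parallelly, set $\varphi(t)=\langle W(t),J(t)\rangle$, and note $\varphi(0)=\varphi'(0)=0$ with $\varphi''=-\langle R(J,\gamma')\gamma',W\rangle$. But the step where you propose to ``derive $\varphi\equiv 0$ by contradiction combined with Rauch comparison'' does not close. If $\gamma_s(t_0)\in L^{\#}(x)$ for all $s$, then $J(t_0)\in T_{\gamma(t_0)}L^{\#}(x)$ is automatic; saying that $\varphi(t_0)\neq 0$ means ``$J(t_0)$ has a component along $W(t_0)$'' is just a restatement of the inequality, not a contradiction with $W_0\perp T_xL^{\#}(x)$ --- one would need to already know that $W(t_0)\perp T_{\gamma(t_0)}L^{\#}(x)$, which is the assertion being proved. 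Rauch comparison bounds the \emph{norm} of a Jacobi field with prescribed initial data; it gives no control on the angle of $J(t)$ with an independent parallel field, so it cannot play the role you assign to it. The actual mechanism in \cite{Wilk} is different: Wilking forms the family $\Lambda$ of all such horizontal variational Jacobi fields, passes to the induced \emph{transversal Jacobi equation} on the quotient $\gamma'(t)^\perp/\Lambda(t)$, observes that the induced transversal curvature operator is still non-negative when $M$ has non-negative curvature, and then runs a Riccati-type monotonicity and dimension-stabilization argument on that quotient. It is this Riccati estimate on the quotient equation --- not a Rauch estimate on the original one --- that simultaneously forces $W(t)$ to be parallel and $R(W,\gamma')\gamma'=0$. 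You correctly anticipate in your last paragraph that the conclusions must be established together in a single bootstrap, but the tool that actually carries the bootstrap is missing from the proposal, so as it stands there is a genuine gap.
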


\begin{prop} \label{complete}
Under the assumptions of \pref{dual}, if  all dual leaves are complete in their induced  metric  then  the dual foliation
is a singular Riemannian foliation. 
\end{prop}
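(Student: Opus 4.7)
My plan is to derive the Riemannian property of $\mathcal{F}^{\#}$ from a local metric splitting around each dual leaf, which makes orthogonality preservation automatic. By \cite{Wilk} the dual foliation is already known to be a smooth singular foliation, so the task reduces to verifying that every geodesic $\eta$ of $M$ with $\eta'(0)\perp T_{\eta(0)}L^{\#}(\eta(0))$ satisfies $\eta'(t)\perp T_{\eta(t)}L^{\#}(\eta(t))$ for all $t$.

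First I would observe that at every $p\in M$ one has $H_p\subseteq T_pL^{\#}(p)$, because every $\mathcal{F}$-horizontal geodesic starting at $p$ remains in $L^{\#}(p)$ by the very definition of the dual leaf, so its initial velocity is tangent to $L^{\#}(p)$. Consequently $\nu_pL^{\#}(p)\subseteq T_pL(p)$, and any geodesic $\eta$ as above necessarily starts tangent to an $\mathcal{F}$-leaf. The decisive input is \pref{dual}: along any $\mathcal{F}$-horizontal geodesic $\gamma$ lying in a dual leaf, the orthogonal subbundle $\nu L^{\#}$ is parallel in $TM$ and every vector in it spans a zero-curvature plane with $\gamma'$. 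I would then iterate this parallelism along broken horizontal concatenations inside a fixed dual leaf; completeness of that leaf in its induced metric is precisely what allows such concatenations to be performed globally and what makes the resulting parallel transport on $\nu L^{\#}$ well defined.

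Combining the globally parallel normal bundle with the curvature-vanishing condition of \pref{dual}, the normal exponential map on a small tube around $L^{\#}$ should yield a diffeomorphism from a disc bundle over $L^{\#}$ onto a tubular neighborhood of $L^{\#}$ in $M$, and this diffeomorphism should carry the obvious product foliation on the disc bundle to the restriction of $\mathcal{F}^{\#}$ to the tube. In this picture, nearby dual leaves appear as parallel slices while the geodesics emanating from $L^{\#}$ normal to it are the fibers, so these geodesics remain orthogonal to every slice they meet, yielding the Riemannian property.

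The main obstacle is precisely this globalization step: even granting the infinitesimal parallelism and curvature vanishing of \pref{dual}, one must check that the parallel transport defined by broken horizontal concatenations is self-consistent and that the zero-curvature information integrates to a genuine local metric product in a neighborhood of $L^{\#}$, rather than yielding only a formal flat connection. The completeness hypothesis is indispensable at both of these points, since it is needed to close up consistency loops of horizontal concatenations inside a leaf and to ensure that the normal exponential map maps onto an honest neighborhood of the now closed immersed dual leaf $L^{\#}$.
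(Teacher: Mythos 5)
The paper does not contain a proof of this proposition: both \pref{dual} and \pref{complete} are quoted directly from \cite{Wilk}, so there is no internal argument to compare against. Evaluating your sketch on its own terms, the preliminary observations are correct and pertinent: $H_p \subseteq T_p L^{\#}(p)$ for every $p$, hence $\nu_p L^{\#}(p) \subseteq T_p L(p)$, so a geodesic starting orthogonally to a dual leaf begins tangent to an $\mathcal F$-leaf, and the parallelism and flatness supplied by \pref{dual} are exactly the right tools. The overall strategy --- transport the normal bundle $\nu L^{\#}$ along broken horizontal geodesics and integrate the zero-curvature condition into a local product structure --- is in the spirit of Wilking's argument.

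However, as you acknowledge in your last paragraph, the central steps remain open, and that makes this a plan rather than a proof. You never establish (a) that the parallel transport of $\nu L^{\#}$ along broken horizontal concatenations is independent of the chosen path, i.e.\ that the resulting flat connection has no obstructive holonomy, nor (b) that the pointwise data of \pref{dual} --- $W$ parallel along $\gamma$ and $sec(w\wedge\gamma')=0$ --- actually integrate to a totally geodesic flat rectangle $(s,t)\mapsto \exp_{\gamma(t)}(s\,w(t))$. In nonnegative curvature a flat-strip type lemma does deliver (b), but it must be invoked and verified; the phrase ``should yield'' is carrying the real weight. Furthermore, the asserted diffeomorphism from a disc bundle onto a tubular neighborhood of $L^{\#}$ is not automatic: a dual leaf is a complete injectively immersed submanifold but need not be properly embedded, so a global tube may not exist, and no focal-radius control has been established. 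The desired conclusion --- geodesics issuing orthogonally from a dual leaf remain orthogonal to all dual leaves --- requires only a local equidistance statement along each such geodesic, and the argument should be organized around producing that directly from the flat-rectangle lemma, not around a global tubular neighborhood of the leaf.
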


We provide an easy application of these results:

\begin{lem} \label{easyprod}
Let $M$ be a simply connected, complete, non-negatively curved manifold.  If $\mathcal F$ is a polar {\it regular} foliation
of $M$ then $M$ splits isometrically as a product $M=M_1 \times M_2$ and $\mathcal F$ is given by the projection
$p_1 :M\to M_1$.
\end{lem}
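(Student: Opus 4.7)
The plan is to recognize the dual foliation $\mathcal{F}^\#$ as the foliation by sections, invoke \pref{complete} to see that $\mathcal{F}^\#$ is itself a singular Riemannian foliation of $M$, use its defining geodesic property to show that the leaves of $\mathcal{F}$ are totally geodesic as well, and then conclude by de Rham's decomposition theorem.

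First I would observe that, since $\mathcal{F}$ is regular and polar, the horizontal distribution $H$ is smooth and integrable, and its maximal integral submanifolds are the complete totally geodesic sections. Every horizontal geodesic therefore stays inside a single section, and so does every broken horizontal geodesic starting at $x$. Hence $L^\#(x) = \Sigma_x$, all dual leaves are complete in their induced metric, and \pref{complete} applies to give that $\mathcal{F}^\#$ is a singular Riemannian foliation of $M$.

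Next I would apply the defining axiom of a singular Riemannian foliation to $\mathcal{F}^\#$: any geodesic $\eta$ of $M$ that starts orthogonally to a section remains orthogonal to every section it meets. Orthogonality to a section means tangency to $V = T\mathcal{F}$, and since every point of $M$ lies on a section, this forces $\eta'(t) \in V_{\eta(t)}$ at every $t$. Being everywhere tangent to the integrable distribution $V$, such a geodesic is confined to a single leaf of $\mathcal{F}$. In particular, for every $x \in M$ and every $v \in V_x$ the geodesic $t \mapsto \exp_x(tv)$ remains in $L(x)$, so $L(x)$ is totally geodesic.

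Both $\mathcal{F}$ and $\mathcal{F}^\#$ now have complete totally geodesic leaves, and $V$, $H$ are orthogonal complementary distributions. Parallel transport along any curve inside a leaf preserves both that leaf's tangent space and its orthogonal complement; approximating arbitrary curves by concatenations of such leaf segments shows that both $H$ and $V$ are invariant under Levi-Civita parallel transport. Since $M$ is simply connected and complete, de Rham's decomposition theorem yields the desired isometric splitting $M = M_1 \times M_2$ with $M_1 \cong \Sigma_x$ and $M_2 \cong L(x)$, in which $\mathcal{F}$ is given by the fibers of the projection onto $M_1$. I foresee no serious obstacle: the whole argument turns on the near-tautological identification of $\mathcal{F}^\#$ with the section foliation and on the application of \pref{complete}, after which the total geodesicity of $\mathcal{F}$-leaves and the splitting are essentially formal.
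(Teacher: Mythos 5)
Your proof is correct and follows essentially the same route as the paper's: identify the dual leaves with the sections, invoke \pref{complete} to get that $\mathcal F^\#$ is a singular Riemannian foliation, deduce that the leaves of $\mathcal F$ are totally geodesic, and split. The only (cosmetic) difference is that you extract total geodesicity of the $\mathcal F$-leaves directly from the defining geodesic axiom applied to $\mathcal F^\#$, whereas the paper notes that the horizontal distribution of $\mathcal F^\#$ is $T\mathcal F$ and hence integrable, so that the $\mathcal F$-leaves are the sections of $\mathcal F^\#$; both arguments establish the same fact, and both conclude via the standard splitting for two complementary totally geodesic foliations on a simply connected complete manifold.
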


\begin{proof}
By definition, the leaves of the dual foliation $\mathcal F^\#$ are exactly the sections of $\mathcal F$. In particular,
they are complete. Due to \pref{complete}, $\mathcal F^\#$ is a Riemannian foliation as well. Moreover,  the horizontal distribution of $\mathcal F^\#$ coincides with $\mathcal F$, hence it is integrable. Thus the leaves of $\mathcal F$ are the sections of $\mathcal F^\#$. Thus they are totally geodesic. A polar foliation with  totally geodesic leaves is locally given by a projection onto a section, which is locally a direct factor of $M$.  Since $M$ is simply connected, we get a global decomposition $M=M/\mathcal F  \times M /\mathcal F^\#$. 
\end{proof}

\subsection{Spherical Coxeter groups}
A \emph{spherical Coxeter group} is a reflection group $\Gamma$ on a round sphere $\mathbb S^k$.
We will call it reducible if the corresponding action on $\mathbb R^{k+1}$  is reducible.
There is a unique decomposition $\Gamma =\Gamma _1 \times \Gamma_ 2 \times ...\times \Gamma_l$
 and a $\Gamma$-invariant orthogonal decomposition $\mathbb R^{k+1} = V_0 \oplus V_1 ...\oplus V_l$,
such that $\Gamma _i, i=1,...,l$ acts as an irreducibe  reflection group on $V_i$ and trivial on all $V_j, j\neq i$.  

The quotient $\Delta = \mathbb S^k /\Gamma$ is the spherical join $\Delta =\Delta _0 \ast \Delta _1 ...\ast \Delta _l$
 of the round sphere $\Delta _0$ and irreducible Coxeter
simplices $\Delta _i = S_i /\Gamma _i$, where $S_i$ is the unit sphere of $V_i$.

The group $\Gamma$ is called \emph{crystallographic}, if  all dihedral angles of the spherical polyhedron 
$\Delta$ are given by $\pi /m$,  where $m$ can only take the values $1,2,3,4,6$.   If  none of the  direct factors
 $\Gamma_ i$ is one-dimensional, then  none of the dihedral angles of $\Delta$ is equal to $\pi/6$.

Assume now that $\Delta = \mathbb S^k /\Gamma$ is the quotient $\Delta = M/\mathcal F$ of a polar foliation $\mathcal F$
on a simply connected manifold $M$. Take a point $y$ in a  face of $\Delta$ of codimension $2$ in $\Delta$.
Take a point $x$ in the  leaf over $y$. Then the tangent space $T_y \Delta$ is the quotient space of
the polar foliation $T_x \mathcal F$ on $T_x M$ (cf. \cite{Lyt}).  The famous theorem of M\"unzner  (\cite{Mu1}, \cite{Mu2})
implies that the dihedral angle at $y$ can be only given by $\pi/m$, with $m=1,2,3,4,6$.  We deduce that
$\Gamma$ is crystallographic.

\subsection{Spherical buildings}
We define buildings as metric spaces in contrast to their original simplicial definition of Tits. 
We refer to \cite{charney} and \cite{kleiner} for more on buildings considered from our point of view. 
Let $\Gamma$ be a spherical Coxeter group acting on $\mathbb S^n$. A \emph{spherical building} of type $\Gamma$ is a metric space 
$X$ with a  set of isometric embeddings $\phi: \mathbb S^n \to X$, called \emph{apartments}, such that the following two conditions hold true:  
Any pair of points of $X$ is contained in some  apartment and the transition maps between different apartments are given by restrictions of elements of $\Gamma$. 

Consider the natural decomposition of $\mathbb {S}^n$ into  polyhedra isometric to  $\mathbb S^n/\Gamma$. This polyhedral structure is preserved by 
$\Gamma$,  hence we obtain a natural polyhedral structure on $X$.  The building $X$ is called thick if all walls of codimension $1$ bound at least $3$ polyhedra. 

A spherical join of spherical buildings is a spherical building, in particular so is the suspension of a spherical building 
(cf. \cite{Bridson}, for spherical joins and suspensions).
A spherical building $X$ is called irreducible if it is indecomposable as a spherical join. For a thick building of dimension at least 
$1$ this is equivalent to the irreducibility of the Coxeter group $\Gamma$.

\subsection{Obtaining new foliations}
Let again $p:M\to \Delta$ be the projection whose fibers are leaves of a polar foliation $\mathcal F$ on $M$.
Write again $\Delta = N/\Gamma$, where $N$ is the universal covering of any section.
 Assume that there is a $\Gamma$-invariant polar foliation 
$\mathcal G$ on $N$. Then $\Gamma$ acts on the quotient   orbifold  $ N/\mathcal G $ by isometries.  Assume 
that this action has closed (i.e. discrete) orbits and let $\Delta '$  be the quotient orbifold 
$\Delta ' =  (N/\mathcal G) /\Gamma$.  The projection $N\to \Delta '$ factors by definition through $\Delta$.

Then  the composition $p' =q\circ p  :M\to \Delta '$  
is the quotient map of a new polar foliation $\mathcal F '$ on $M$.

Namely, $p$ is a submetry (i.e., its fibers are equidistant) and so is $q$, hence
the fibers of $p'$ are equidistant as well.  Around the preimage of a regular point of $\Delta'$, $p'$ is the composition of two Riemannian submersions with  sections, hence it is itself a Riemannian submersion with sections.  It  only remains  to prove that $\mathcal F'$ is a singular foliation. This can be  done directly.  A slightly more elegant and sophisticated proof
is obtained as follows.  It is a direct consequence of our construction and the main definition of  \cite{Toe},
that the regular  fiber has  \emph{parallel focal structure}. The main result of \cite{Toe} now implies that $\mathcal F '$
is a singular Riemannian foliation.

By construction, each dual leaf of $\mathcal F'$ is contained in a dual leaf of $\mathcal F$.   On the other hand,
if the polar foliation $\mathcal G$ on $N$ has only one dual leaf, then the dual leaves of $\mathcal F$ and of
$\mathcal F'$ coincide.

We are going to use this construction only in two simple cases.
First assume that $\Delta $ is a direct metric product $\Delta = \Delta '\times \Delta ''$.
Then the composition $p'$ of $p:M\to \Delta$ and the projection $q:\Delta \to \Delta '$ defines a polar foliation 
on $M$.

We will consider only one other case.  Assume that $\Delta $ is given as the quotient
$\mathbb S^k/\Gamma$, where $k\geq 2$ and $\Gamma$ is a spherical  Coxeter group. Assume that $\Gamma$ is reducible.
Consider the $\Gamma$-invariant orthogonal decomposition $\mathbb R^{k+1} =V_1 \oplus V_2$.
Then $\Delta $ is a spherical  join  $\Delta = \Delta _1 \ast \Delta _2$. 
Collapsing $\Delta _i$ to points, we obtain a projection $\Delta \to [0,\pi /2]$, which corresponds to the reducible, polar, codimension one foliation on $\mathbb S^k$  which is given by the distance function $p': \mathbb S^k \to [0,\pi/2]$  to the sphere 
$\mathbb S^k \cap V_1$.

Note, that any non-trivial singular Riemannian foliation on the round sphere has only one dual leaf, due to \cite{Wilk}.
Collecting the previous observations we arrive at:

\begin{lem} \label{late}
Let $\mathcal F$ be a polar foliation on a complete Riemannian manifold $M$. Assume that the quotient 
$\Delta$ is isometric to $\mathbb S^k/\Gamma$, with  a reducible Coxeter group $\Gamma$.
Then there is a coarser polar foliation $\mathcal F'$ on $M$, which has the same dual leaves as $\mathcal F$ and 
whose quotient space $\Delta ' $ is the interval $[0,\pi /2]$.
\end{lem}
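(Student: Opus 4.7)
The plan is to apply the ``obtaining new foliations'' construction from the preliminaries to a carefully chosen $\Gamma$-invariant polar foliation on $N=\mathbb S^k$. Since $\Gamma$ is reducible, the unique decomposition recalled in the preliminaries gives a nontrivial $\Gamma$-invariant orthogonal split $\mathbb R^{k+1}=V_1\oplus V_2$, with each summand preserved by $\Gamma$ (one simply groups together the canonical irreducible components, and possibly the trivial summand $V_0$, so that both $V_i$ are nonzero). This yields the spherical join $\mathbb S^k=\mathbb S(V_1)\ast\mathbb S(V_2)$ and the associated codimension-one polar foliation $\mathcal G$ on $\mathbb S^k$ whose leaves are the level sets of the distance function $d(\cdot,\mathbb S(V_1))$.

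Because $\Gamma$ leaves each $V_i$ invariant, it preserves $\mathbb S(V_1)$ and thus permutes the leaves of $\mathcal G$; in fact the induced action on $N/\mathcal G=[0,\pi/2]$ is trivial, since $\Gamma$ fixes the two endpoints of this interval and acts by isometries on it. Both hypotheses of the construction therefore hold: $\mathcal G$ is $\Gamma$-invariant, and the $\Gamma$-action on $N/\mathcal G$ has closed (discrete) orbits. The construction then produces a polar foliation $\mathcal F'$ on $M$ whose quotient map $p':M\to\Delta'$ factors as $M\xrightarrow{p}\Delta\to\Delta'$, with $\Delta'=(N/\mathcal G)/\Gamma=[0,\pi/2]$. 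In particular $\mathcal F'$ is coarser than $\mathcal F$.

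To identify the dual leaves of $\mathcal F$ with those of $\mathcal F'$, I would invoke the remark immediately preceding the lemma: every nontrivial singular Riemannian foliation on a round sphere has a single dual leaf, by Wilking's theorem. Applied to $\mathcal G$ on $\mathbb S^k$, this supplies precisely the hypothesis used in the ``obtaining new foliations'' paragraph to conclude that the dual foliations of $\mathcal F$ and $\mathcal F'$ coincide. The one subtlety is to ensure that $\Gamma$ does not swap the factors $V_1$ and $V_2$ (which would collapse the quotient to $[0,\pi/4]$ rather than $[0,\pi/2]$); this is avoided automatically by grouping the summands of the canonical $\Gamma$-invariant decomposition, since those summands are individually $\Gamma$-invariant, not merely permuted by $\Gamma$.
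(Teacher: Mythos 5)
Your proposal is correct and follows essentially the same route as the paper: the paper's proof of this lemma is precisely the application, spelled out in the "Obtaining new foliations" subsection, of the construction to the codimension-one foliation $\mathcal G$ on $\mathbb S^k$ given by the distance to $\mathbb S^k \cap V_1$, together with Wilking's one-dual-leaf observation. The only additional content in your write-up is the explicit check that $\Gamma$ does not swap $V_1$ and $V_2$, which the paper leaves implicit but which is indeed guaranteed by taking the canonical $\Gamma$-invariant decomposition.
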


\section{Dual foliations on symmetric spaces} \label{3}
We will use a general observation about dual leaves in symmetric spaces.
  Marco Radeschi has pointed out  
that a variant of the two subsequent results appeared in  \cite{Tapp}.  

\begin{prop}   \label{connected}
Let $M$ be a non-negatively curved symmetric space. Let $\mathcal F$ be a singular Riemannian foliation on $M$ and let
$\mathcal F ^ \#$  be the dual foliation. Then any leaf  $L ^ \#$ of the dual foliation is contained in a 
totally geodesic submanifold $Z$ of the same dimension as $ L ^\#$. Moreover, $Z$ is a direct factor of $M$.
In particular, if the dual leaf $ L ^\#$ is complete, it is a direct factor of $M$. 
\end{prop}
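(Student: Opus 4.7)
The plan is to identify $V_x := T_xL^\#$ at a fixed regular point $x\in L^\#$ with the Lie triple system generated by $H_x$, and then use \pref{dual} together with the symmetric space relation between sectional curvature and Lie bracket to show that $V_x$ is a Lie triple direct summand of the orthogonal symmetric Lie algebra of $M$, which produces the desired totally geodesic direct factor $Z$.

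Write $M = G/K$ with Cartan decomposition $\mathfrak g = \mathfrak k \oplus \mathfrak p$ and identify $T_xM$ with $\mathfrak p$. Since $M$ is non-negatively curved, its de Rham factors are either Euclidean or of compact type, and on the compact-type factor the sectional curvature of a plane $v\wedge w\subseteq\mathfrak p$ is a positive multiple of $\|[v,w]\|_{\mathfrak k}^2$; consequently $\sec(v\wedge w) = 0$ is equivalent to $[v,w] = 0$ in $\mathfrak k$. A broken horizontal geodesic is a concatenation of transvections along horizontal directions, so $L^\#$ is the orbit $G'\cdot x$, where $G'\le G$ is the subgroup generated by such transvections; a short Lie-algebra computation then identifies $V_x = T_x(G'\cdot x)$ with the Lie triple system in $\mathfrak p$ generated by $H_x$.

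Every horizontal geodesic from $x$ stays in $L^\#$, so $H_x\subseteq V_x$ and therefore $W_x := V_x^\perp \subseteq T_xL$. From \pref{dual}, $\sec(h\wedge w) = 0$ for all $h\in H_x$ and $w\in W_x$, which by the curvature--bracket identity translates to $[H_x,W_x] = 0$ in $\mathfrak k$. Combining this with the Lie triple generation of $V_x$ by $H_x$ and iterating the Jacobi identity propagates the vanishing to $[V_x,W_x] = 0$, which is precisely the condition that $V_x$ be a Lie triple direct summand of $\mathfrak p$; this yields an ideal decomposition $\mathfrak g = \mathfrak g_1 \oplus \mathfrak g_2$ with $V_x = \mathfrak g_1 \cap \mathfrak p$.

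To finish, $Z := \exp_x(V_x)$ is a complete totally geodesic submanifold of $M$ with $T_xZ = V_x$, so $\dim Z = \dim L^\#$; it is a Riemannian direct factor of $M$ (after passing to the universal cover if necessary) because $V_x$ is a Lie triple summand; and $L^\#\subseteq Z$ because every broken horizontal geodesic starting in $L^\#$ stays inside $Z$. If $L^\#$ is complete in its induced metric, it is a closed, connected, totally geodesic submanifold of $Z$ of full dimension, hence equals $Z$. I expect the most delicate step to be the Jacobi propagation from $[H_x,W_x] = 0$ to $[V_x,W_x] = 0$; one should also split off the Euclidean de Rham factor first, since the curvature--bracket identity is only literally valid on the compact-type part.
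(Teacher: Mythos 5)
Your proposal shares the key ingredients with the paper's proof (working in $\mathfrak g=\mathfrak k\oplus\mathfrak p$, reading $\sec(v\wedge w)=0$ as $[v,w]=0$, extracting $[H_x,W_x]=0$ from \pref{dual}, and concluding via a Lie triple summand). However, the logical spine is different and has a real gap.

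The crucial unproven claim is that $V_x:=T_xL^\#$ equals the Lie triple system generated by $H_x$. This is false in general, and the ``short Lie-algebra computation'' is not a minor omission. The inclusion $L^\#\subset G'\cdot x$ is fine, but the reverse inclusion is not: an arbitrary word in transvections along horizontal directions need not be a \emph{chained} broken horizontal geodesic, so $G'\cdot x$ may well be strictly larger than $L^\#$, and in any case neither the orbit identification nor the generation of $V_x$ by $H_x$ is justified. A concrete counterexample to the identification: let $\mathcal F$ on $\mathbb S^2$ be the singular Riemannian foliation by latitude circles and the two poles. At a regular $x$, $H_x$ is one-dimensional (tangent to the meridian), so the Lie triple system generated by $H_x$ is one-dimensional; yet $L^\#(x)=\mathbb S^2$ (through the singular poles one can turn), so $V_x=T_x\mathbb S^2$ is two-dimensional. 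The conclusion $[V_x,W_x]=0$ is of course still true there (since $W_x=0$), but your mechanism for proving it -- propagating $[H_x,W_x]=0$ through $\mathrm{LTS}(H_x)$ -- would not reach $V_x$. Your Jacobi propagation step itself is correct: if $[H_x,W_x]=0$ then the Lie triple system generated by $H_x$ commutes with $W_x$; the problem is only that this Lie triple system need not be $V_x$.

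There is a second assertion you make without proof, and it is in fact the heart of the matter: ``$L^\#\subseteq Z$ because every broken horizontal geodesic starting in $L^\#$ stays inside $Z$.'' This is exactly what requires an argument. The paper handles both issues at once by reversing the direction of the reasoning: instead of trying to describe $V_x$ from below (via $H_x$), it defines $W_x'=\{w:[W_x,w]=0\}$ and $W_x''=W_x'\ominus(W_x\cap W_x')$ directly from the \emph{normal} space $W_x$, checks $W_x''$ is a Lie triple system, sets $Z=\exp(W_x'')$, and then proves $L^\#\subset Z$ by induction along the breaks of a broken horizontal geodesic. The induction works because $W_{\gamma(t)}$ is parallel along horizontal geodesics (\pref{dual}) and because parallel transport in a symmetric space preserves the curvature tensor, hence carries $W''_{x_i}$ to $W''_{x_{i+1}}$; the starting direction of each segment lies in $W''_{x_i}$ since $H_{x_i}\subset W'_{x_i}$ and $H_{x_i}\cap W_{x_i}=0$. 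Only then, from $W_x''\cap W_x=0$ and $L^\#\subset Z$, does the dimension count force $\dim Z=\dim L^\#$ and hence that $W_x$ and $W_x''$ are complementary commuting Lie triple systems. To fix your proof you would need to replace the $\mathrm{LTS}(H_x)$ step with a construction of this type, and supply the parallel-transport induction for $L^\#\subset Z$.
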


\begin{proof}
Take a point $x\in  L^\#$.  Let $W_x$ denote  the normal space $W_x=\nu _x (L ^\# )$ to the dual leaf.
We let $W_x '$ be the set of all vectors $w'$ in $T_x M$ such that, for all $w\in  W_x$, the sectional curvature 
$sec (w\wedge w' )$ is $0$.   Identifying $x$ with the origin of the symmetric space $M=G/K$ and writing
$\mathfrak g =\mathfrak k \oplus \mathfrak p$, with the usual identification of $\mathfrak p$ and $T_x M$,
we have 
$$W _x  ' := \{ w \in T_x M |  [W_x ,w] =0 \}$$

Due to the Jacobi identity, $[[W'_x ,W' _x],W'_x ] \subset W' _x$. Thus, by definition, $W' _x$ is a \emph{Lie triple system}.
The subspace $W'_x \cap W_x$ commutes with $W'_x$.  Hence the orthogonal complement $W'' _x$ of 
$(W_x \cap W' _x)$ in $W' _x$  is a Lie triple system as well.   Exponentiating the Lie triple system $W'' _x$,
we obtain a totally geodesic submanifold $Z= \exp (W _x'' )$. By definition, $W_x '' \cap W_x = \{ 0 \}$. Hence $\dim (Z)\leq \dim (M) - \dim (W_x) = \dim (L^\# (x))$.

We are going to prove that $Z$ contains $L^\# (x)$. Take an  $\mathcal F$-horizontal broken geodesic $\gamma$ that starts in $x= x_1$ and consists of a finite  concatenation of   
$\mathcal F$-horizontal geodesics $\gamma_i$ connecting $x_i$ and $x_{i+1}$. 
Due to \pref{dual}, the starting direction of $\gamma _i$ is contained in $W'' _{x_i}$.
Moreover, the 
 parallel translation along $\gamma _i$ sends $W_{x  _i} $ to $W_{x_{i+1}}$. 

Since the curvature tensor is invariant under parallel translation in the  symmetric space $M$, the parallel translation along $\gamma _i$ sends $W_{x_i} ''$ to  $W_{x_{i+1}}''$.  By induction on the number of concatenations, we deduce that $\gamma$ is contained in $Z$.
Since any point of $L_x  ^\#$ can be reached from $x$ by a broken $\mathcal F$-horizontal geodesic, we deduce that
$ L _x ^ \# $  is contained in $Z$.

   Thus we must have $\dim (Z)= \dim (L^\# (x))$. Then $W_x$ and $W''_x$ are complementary commuting subspaces of $\mathfrak p$.  Therefore,  $W_x$ is a Lie triple system as well, and $M$ splits as the product of $Z$ and its orthogonal complement.
\end{proof}

In particular, we deduce:

\begin{cor}  \label{cordual}
If $\mathcal F$ is a singular Riemannian foliation on a compact irreducible symmetric space then the dual foliation has only one leaf, unless  $\mathcal F$ has only one leaf.
\end{cor}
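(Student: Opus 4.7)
The plan is to derive the corollary as a direct application of Proposition \ref{connected}, exploiting the fact that an irreducible symmetric space has no nontrivial de~Rham factors.

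First I would assume that $\mathcal F$ has more than one leaf and argue that under this hypothesis every dual leaf is open in $M$. Fix an arbitrary $y\in M$. Because $L(y)$ is a proper submanifold of $M$, the tangent space $T_yL(y)$ is a proper subspace of $T_yM$, so the horizontal space $H_y$ is nontrivial. Picking any nonzero $v\in H_y$, the horizontal geodesic $t\mapsto \exp_y(tv)$ shows that $L^\#(y)$ contains points arbitrarily close to, but distinct from, $y$; since $L^\#(y)$ is a smoothly immersed submanifold (by Wilking), it must have positive dimension.

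Next I apply \pref{connected} at the point $y$. It produces a totally geodesic submanifold $Z_y\supset L^\#(y)$ with $\dim Z_y=\dim L^\#(y)>0$, and asserts that $Z_y$ is a direct de~Rham factor of $M$. Since $M$ is irreducible, its only positive-dimensional direct factor is $M$ itself, so $Z_y=M$. Consequently $L^\#(y)$ is an immersed submanifold of $M$ of full dimension, hence an open subset of $M$.

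Finally, the dual leaves partition $M$, and the argument just given shows that each of them is open. Because $M$ is connected, this partition into nonempty disjoint open sets must have exactly one member, so the dual foliation has a single leaf. The only step requiring any care is the observation that a nontrivial horizontal vector at $y$ actually produces a higher-dimensional dual leaf, which is immediate once one invokes the smoothness of dual leaves from Wilking's work; the rest is purely formal and I expect no real obstacle.
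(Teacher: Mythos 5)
Your argument is exactly the deduction the paper intends: it gives no separate proof, prefacing the corollary with ``In particular, we deduce,'' and the chain ``nontrivial horizontal space $\Rightarrow$ positive-dimensional dual leaf $\Rightarrow$ positive-dimensional direct factor $Z$ $\Rightarrow$ $Z=M$ by irreducibility $\Rightarrow$ dual leaves open $\Rightarrow$ one dual leaf by connectedness'' is the intended one. The one phrase to tighten is ``$L(y)$ is a proper submanifold, hence $T_yL(y)$ is a proper subspace of $T_yM$'' --- as stated this is not a valid implication (an open, proper subset is a counterexample); what one should say is that in a singular Riemannian foliation a codimension-zero leaf is necessarily the unique leaf (a singular point would emit a nonconstant horizontal geodesic entering the open leaf, where no nonzero normal vectors exist, by transnormality), so under your standing hypothesis every leaf has positive codimension and $H_y\neq\{0\}$ indeed holds at every $y$.
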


Another  consequence, we will use is:

\begin{cor} \label{factor}
Let $\mathcal F$ be a singular Riemannian foliation on a simply connected symmetric space $M$. If the dual leaves of $\mathcal F$ are
 complete  then $M$ splits as $M=M_1 \times M_2$ such that the dual leaves of $\mathcal F$ are exactly the $M_2$-factors,
i.e., all dual leaves have the form $\{ x_1  \} \times M_2$.
\end{cor}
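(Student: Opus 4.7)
The plan is to use \pref{complete} and \pref{connected} to get a product decomposition at a single principal point of $\mathcal F^\#$, and then propagate it to all of $M$ via a semicontinuity-and-connectedness argument.

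By \pref{complete}, $\mathcal F^\#$ is a singular Riemannian foliation. I pick $x_0$ in its principal stratum, so that $L^\#(x_0)$ has the maximal leaf dimension $d$, and apply \pref{connected} to write $M$ isometrically as $M_1\times M_2$ with $M_2 = L^\#(x_0)$. The parallel distribution $TM_2$ integrates to the foliation of $M$ by slices $\{a\}\times M_2$, so the goal reduces to showing that every dual leaf $L^\#(y)$ coincides with $\{\pi_1(y)\}\times M_2$. For any principal $y$, \pref{connected} realizes $L^\#(y)$ as a direct factor of $M$, necessarily of the form $Z_1(y)\times Z_2(y)$ with $Z_i(y)$ a direct factor of $M_i$ and $\dim Z_1(y)+\dim Z_2(y)=d$. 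The integer-valued functions $y\mapsto\dim(T_yL^\#(y)\cap T_yM_i)$ are upper-semicontinuous on the principal stratum; since their sum is the constant $d$, each is locally constant, and since at $x_0$ they take the values $0$ and $d$, by connectedness of the principal stratum they do so throughout, yielding $T_yL^\#(y)=T_yM_2$ and hence $L^\#(y)=\{\pi_1(y)\}\times M_2$ for every principal $y$.

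The main obstacle I foresee is twofold. First, justifying connectedness of the principal stratum amounts to showing that the singular strata of $\mathcal F^\#$ have codimension at least two in $M$; since by \pref{connected} the leaves of $\mathcal F^\#$ are totally geodesic direct factors of $M$, I expect this to go through because totally geodesic leaves forbid the codimension-one, reflection-type singular strata. Second, I must rule out any singular (lower-dimensional) dual leaf: such a leaf would be closed and complete but would arise as the Hausdorff limit of the nearby principal $M_2$-slices, forcing it to contain a full $M_2$-slice and so contradicting the presumed dimension drop. Once these two points are handled, the splitting $M=M_1\times M_2$ from \pref{connected} is precisely the product structure asserted by the corollary.
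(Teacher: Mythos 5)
Your proof does not go through as written, and the central difficulty is a claim you make without justification: that the direct factor $L^\#(y)$ of $M=M_1\times M_2$ is \emph{necessarily} of the form $Z_1(y)\times Z_2(y)$ with $Z_i(y)$ a direct factor of $M_i$. That is false in general for direct factors of a Riemannian product: already in $M=\mathbb R^2=\mathbb R\times\mathbb R$ the diagonal line is a totally geodesic direct factor, but it is not a product of factors of the two axes. Your semicontinuity argument rests entirely on the identity $\dim(T_yL^\#\cap T_yM_1)+\dim(T_yL^\#\cap T_yM_2)=d$, which is equivalent to the very splitting you need to prove; for a ``diagonal'' factor both intersections can be zero. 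So the argument is circular at its key step. You also flag two further ``obstacles'' (connectedness of the principal stratum, exclusion of lower-dimensional dual leaves) without resolving them inside the proof; these are indeed routine for a singular Riemannian foliation (the regular stratum is open, dense and connected in general, with no appeal to total geodesy), but as written they are gaps too.

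The essential input you never use is precisely what the paper's one-line argument leans on: the leaves of the singular Riemannian foliation $\mathcal F^\#$ are \emph{equidistant}. Once one dual leaf is identified with the slice $\{a\}\times M_2$, equidistance forces every other dual leaf $L^\#(y)$ to lie in a fixed distance tube $S_r(a)\times M_2$; combined with $L^\#(y)$ being a totally geodesic direct factor of the same dimension, this pins it down as a parallel slice $\{a'\}\times M_2$. Without invoking equidistance you have thrown away the only structure that distinguishes parallel slices from other direct factors, and a purely de Rham/dimension-count argument cannot recover it.
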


\begin{proof}
Due to \pref{complete}, the dual foliation is a singular Riemannian foliation.  Due to \pref{connected},
all leaves must be factors of $M$. Since these factors are equidistant they must be  $M_2$-factors of the
same   product decomposition $M=M_1 \times M_2$.
\end{proof}

\section{Product decomposition}  \label{4}
Here and  in the sequel,  let  $M$ be a simply connected non-negatively curved symmetric space and let 
$\mathcal F$ be a polar foliation on $M$.  
\subsection{Decomposition of the factor}
We recall that our foliation has closed leaves and  that the quotient space  $\Delta =M/\mathcal F$  is a  Coxeter orbifold.
Moreover,  $\Delta$ is a discrete  quotient of a section, the last being  a totally geodesic submanifold of $M$,
hence a symmetric space itself.  Thus $\Delta $ is given as  $\Delta =N/\Gamma$ with a  symmetric non-negatively curved simply connected manifold $N$ (the universal covering of a section $\Sigma$), on which $\Gamma$, the orbifold fundamental group of $\Delta$, acts as a reflection group.

Let $N=N_0 \times N_1 \times ....\times N_l$ be the direct product  decomposition, where $N_0$ is the Euclidean space and 
where $N_i$ are irreducible of dimension at least $2$.  Any reflection (always at a wall of codimension 1!) on $N$   respects this product decomposition. Hence it induces a reflection on one factor
and identity on all  other factors.  Therefore,  $\Gamma$ is a direct product
 $\Gamma =\Gamma _0  \times \Gamma _1 \times ... \times \Gamma _l$, where $\Gamma _i$ is the subgroup of $\Gamma$
generated by all reflections fixing all factors $N_j,j\neq i$.  Moreover, the quotient $\Delta$ splits isometrically as
the direct product $\Delta = \Delta _0 \times ...\times \Delta _l$, with $\Delta _i =N_i /\Gamma _i$. 

Finally, the only simply connected irreducible symmetric spaces of compact type  which admit a totally geodesic hypersurface  are round spheres. Thus, for all $i\geq 1$,  either $N_i$ is a round sphere or $\Gamma _i$ is trivial. Therefore,  in the above product decomposition all $\Delta _i, i\geq 1$
either have constant positive curvature or they coincide with the   Riemannian manifolds $N_i$ 
(this fact  has been observed in \cite{Kol1}).

\subsection{Decomposition of the space}
We call a polar foliation $\mathcal F$ on a symmetric space $M$   \emph{decomposable}  if $M$ can be decomposed
non-trivially as $M=M_1 \times M_2$ such that $\mathcal F$ splits as $\mathcal F=\mathcal F_1 \times \mathcal F_2$,
a product  of polar foliations on the factors.  Otherwise we call $\mathcal F$  \emph{indecomposable}.

 The proof of the following observation is postponed to Section \ref{6}:

\begin{lem}  \label{postpone}
Assume that the sections of $\mathcal F$ have constant positive curvature.  Then the dual leaves are compact. In particular,
they are factors of $M$.
\end{lem}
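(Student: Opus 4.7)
My plan is to show the dual leaf $L^{\#}(x)$ is compact in three steps; the splitting claim then follows from \cref{factor}, since compactness implies completeness.

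First, I show sections are compact and lie in a single compact slice. A section $\Sigma$ is a complete Riemannian manifold of constant positive curvature, hence a spherical space form, in particular compact. In the de Rham decomposition $M = \mathbb R^n \times M_c$ with $M_c$ compact, a complete geodesic of $\Sigma$ is a complete geodesic of $M$, so it projects to a straight line in $\mathbb R^n$; but $\Sigma$ (hence its projection) is compact, so each such projected line must be constant. Therefore $\Sigma$ lies in a single slice $\{a\}\times M_c$. Since every horizontal geodesic lives in a section, every broken horizontal geodesic from $x = (a_x,b_x)$ stays in $\{a_x\}\times M_c$, and so $L^{\#}(x)\subset \{a_x\}\times M_c$.

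Second, I extract a compact direct factor $Z$ of $M$ containing $L^{\#}(x)$. By \pref{connected}, there is a direct factor $Z$ of $M$ through $x$ with $L^{\#}(x) \subset Z$ and $\dim L^{\#}(x) = \dim Z$. Writing $Z = Z_A \times Z_B$ with $Z_A = \mathbb R^m$ Euclidean and $Z_B$ compact (compatibly with the de Rham splitting of $M$), the previous step gives $L^{\#}(x)\subset Z\cap(\{a_x\}\times M_c)$, which has dimension $\dim Z - m$. Matching dimensions forces $m = 0$, so $Z$ is a factor of $M_c$ and in particular compact, and $L^{\#}(x)$ is open in $Z$.

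Third, to conclude $L^{\#}(x) = Z$ I verify that $L^{\#}(x)$ is also closed in the compact connected $Z$. Given $y \in Z\cap\overline{L^{\#}(x)}$ and $y_n\in L^{\#}(x)$ with $y_n\to y$, I linearize: around $y$ the polar foliation $\mathcal F$ is diffeomorphic to its infinitesimal model $T_y\mathcal F$ on $T_y M$, and under this identification $Z$ corresponds to the linear subspace $T_y L^{\#}\subset T_y M$; the dual leaf of the origin in the linear foliation is exactly $T_y L^{\#}$, every point of which is reached from the origin by a broken horizontal geodesic. Translating back, $y_n\in L^{\#}(y)$ for large $n$, whence $L^{\#}(x) = L^{\#}(y)$ and $y \in L^{\#}(x)$. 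Thus $L^{\#}(x) = Z$ is compact, and \cref{factor} yields the splitting $M = Z\times Z'$ whose $Z$-fibers are the dual leaves.

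The main obstacle will be rigorously executing the local linearization in the third step — identifying a neighborhood of $y$ in $L^{\#}(y)$ with a neighborhood of the origin in $T_y L^{\#}$. The horizontal singular metric $d^{hor}$ introduced in \secref{6}, which measures lengths of broken horizontal geodesics and governs their infinitesimal behavior at singular points, is exactly the tool designed to make this argument rigorous, which is why the proof is naturally postponed to that section.
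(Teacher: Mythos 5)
Your first two steps are sound: sections of constant positive curvature are spherical space forms, hence compact, and since a complete totally geodesic submanifold of $M=\mathbb R^n\times M_c$ that is compact must sit inside a single slice $\{a\}\times M_c$, every broken horizontal geodesic from $x$ stays in $\{a_x\}\times M_c$; combining this with \pref{connected} and the uniqueness of the de Rham splitting does force the factor $Z$ containing $L^\#(x)$ to be compact, with $L^\#(x)$ open in $Z$. But the third step has a genuine gap, and it is exactly the gap that forces the paper to use the machinery of Section~\ref{6}. To run your closedness argument you need, at a boundary point $y\in\overline{L^\#(x)}\cap Z$, that $L^\#(y)$ fills an open neighborhood of $y$ \emph{inside $Z$}. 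Two things go wrong. First, leaf dimension in a singular foliation is lower semi-continuous, which gives $\dim L^\#(y)\le\dim L^\#(x)=\dim Z$, not $\ge$; a priori $y$ could be a singular point of the dual foliation, with $L^\#(y)$ strictly lower-dimensional, so it cannot contain the nearby $y_n$. Second, even if $\dim L^\#(y)=\dim Z$, \pref{connected} only places $L^\#(y)$ inside \emph{some} factor $Z_y$ through $y$, and nothing in your argument identifies $Z_y$ with $Z$. Your appeal to the local linearization of the foliation does not repair this: the linearization near a singular point is only a foliated diffeomorphism, not an isometry, so it does not carry broken $\mathcal F$-horizontal geodesics to broken $T_y\mathcal F$-horizontal geodesics, and hence does not identify the dual leaf of $0$ in $T_y\mathcal F$ with (a neighborhood of $y$ in) $L^\#(y)$. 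The phrase ``$Z$ corresponds to $T_yL^\#$'' is precisely the assertion $T_yZ=T_yL^\#(y)$, which is what you would need to prove.

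The paper's proof avoids all of this by working globally in the horizontal length metric $d^{hor}$. In the irreducible case it invokes the Immervoll--Thorbergsson result and the local-to-global theorem of \cite{charney} to show (\cref{mainirred}) that the dual leaf with the $d^{hor}$-metric has diameter at most $\pi$; in the reducible case this is replaced by the explicit $d(x_1,x_2)=\pi/2$ statement of \pref{mainred}. Because $X$ is tiled by congruent copies of $\Delta$, a $d^{hor}$-geodesic of length at most $\pi$ is a broken horizontal geodesic with a number of breaks bounded only in terms of $\Delta$, and an Arzel\`a--Ascoli argument on such uniformly controlled broken horizontal geodesics shows the dual leaf is sequentially closed, hence compact. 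So while your reduction to a compact factor $Z$ is a nice observation (and, combined with \cref{factor}, correctly handles the ``in particular'' clause once compactness is known), the closedness of $L^\#(x)$ genuinely requires the diameter bound coming from the local spherical-building structure, not a linearization at a boundary point.
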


Now we can prove:

\begin{prop}
 Let $\mathcal F$ be indecomposable.  Then either $\mathcal F$ is trivial, or  hyperpolar, or $\Delta $ has constant positive curvature and the dual foliation $\mathcal F ^\#$ has only one leaf.
\end{prop}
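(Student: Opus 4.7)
The plan is to leverage the product decomposition $\Delta = \Delta_0 \times \Delta_1 \times \cdots \times \Delta_l$ from the previous subsection together with the coarsening construction of \lref{late}, reducing to a case analysis that indecomposability collapses to the desired trichotomy.

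Two cases are immediate. If $\mathcal F$ is trivial there is nothing to prove; and if every $\Delta_i$ for $i \geq 1$ is a point, then $\Delta = \Delta_0$ is covered by the Euclidean space $N_0$, the sections are flat, and $\mathcal F$ is hyperpolar. So I would assume $\mathcal F$ is nontrivial and fix some $i \geq 1$ with $\Delta_i$ of positive dimension. Applying \lref{late} to the projection $\Delta \to \Delta_i$ produces a coarser polar foliation $\mathcal F_i'$ on $M$ with quotient $\Delta_i$, and I would split according to the structure of $N_i$ recorded earlier.

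If $N_i$ is an irreducible symmetric space that is not a round sphere, then $\Gamma_i$ is trivial and $\Delta_i = N_i$ carries no reflection singularities; hence $\mathcal F_i'$ is a regular polar foliation, and \lref{easyprod} gives a splitting $M = M_1 \times M_2$ with $\mathcal F_i'$ the first projection. If instead $N_i$ is a round sphere, then $\mathcal F_i'$ has sections of constant positive curvature; by \lref{postpone} its dual leaves are compact (hence complete), and \pref{complete} together with \pref{connected} yield a splitting $M = M_1 \times M_2$ in which the dual leaves of $\mathcal F_i'$ are the $M_2$-slices and $\mathcal F_i'$ is supported in the $M_1$-direction. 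In either subcase, since $\mathcal F$ refines $\mathcal F_i'$ the leaves of $\mathcal F$ lie in the $M_1$-direction slices; combining this with the Riemannian-submersion local model of a polar foliation on the regular stratum and the isometric product structure of $M$, I would conclude that $\mathcal F$ itself splits as $\mathcal F = \mathcal F^{(1)} \times \mathcal F^{(2)}$ along $M = M_1 \times M_2$. Indecomposability of $\mathcal F$ then forces one factor to be a point: this rules out the irreducible-non-sphere subcase (since $\dim \Delta_i > 0$ and $\mathcal F$ is nontrivial), and in the spherical subcase it forces $\Delta = \Delta_i$ to have constant positive curvature.

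For the final claim, the same chain applied directly to $\mathcal F$ yields the conclusion: by \lref{postpone} the dual leaves of $\mathcal F$ are compact, hence by \pref{complete} and \pref{connected} they are direct factors of $M$; combined with \cref{factor} and indecomposability, $\mathcal F^\#$ has a single leaf. The main obstacle is the bookkeeping just described, namely verifying that the splitting of $M$ produced from the coarser foliation $\mathcal F_i'$ is genuinely inherited by the finer $\mathcal F$ rather than only by $\mathcal F_i'$ itself.
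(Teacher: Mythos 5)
Your overall strategy tracks the paper's: reduce via the decomposition $\Delta=\Delta_0\times\Delta_1\times\cdots\times\Delta_l$, coarsen to a projection onto one factor, use \lref{easyprod} (regular case) or \lref{postpone} together with \pref{complete}, \pref{connected}, \cref{factor} (constant-curvature case) to split $M$, and then argue indecomposability forces the trichotomy. The final step, deducing that $\mathcal F^\#$ has a single leaf when $\Delta$ has constant positive curvature, also matches the paper verbatim. Two remarks, one small and one substantial.

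The small one: you cite \lref{late} for the coarsening, but that lemma is the special case of a reducible spherical Coxeter group on $\mathbb S^k$ collapsed to $[0,\pi/2]$. The construction you actually need is the one in the ``Obtaining new foliations'' subsection for a direct product decomposition $\Delta=\Delta'\times\Delta''$, which produces $\mathcal F_i'$ with quotient $\Delta_i$.

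The substantial one is the step you label as ``bookkeeping'': passing from a splitting $M=M_1\times M_2$ adapted to the coarser foliation $\mathcal F_i'$ to a genuine product decomposition $\mathcal F=\mathcal F^{(1)}\times\mathcal F^{(2)}$. This is not bookkeeping; it is the crux, and the phrase ``combining this with the Riemannian-submersion local model \dots and the isometric product structure'' does not supply an argument. Concretely: writing $\Delta=\Delta_1\times\Delta_2$ with $p_1,p_2$ the induced submetries, knowing that the dual leaves of $\mathcal F_1'$ are $M_1$-slices tells you that leaves of $\mathcal F_1'$ are saturated in the $M_2$-direction (so $p_1$ factors through $q_1:M\to M_1$), but it says nothing directly about $p_2$. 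The missing and essential observation, which is the heart of the paper's proof, is that every $\mathcal F_1'$-horizontal geodesic projects into a $\Delta_1\times\{\ast\}$ slice and hence to a single point under $p_2$; therefore each dual leaf of $\mathcal F_1'$ (an $M_1$-slice) lies inside a single leaf of $\mathcal F_2'$, so the leaves of $\mathcal F_2'$ are unions of $M_1$-slices and $p_2$ factors through $q_2:M\to M_2$. Only with both factorizations in hand does $\mathcal F=\mathcal F_1\times\mathcal F_2$ follow. Without this, the local submersion model alone will not give you that the horizontal lift of the $\Delta_2$-direction is tangent to the $M_2$-factor, which is exactly what could fail.
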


\begin{proof}
 Assume that $\Delta$ is non-trivially decomposed as $\Delta _1 \times \Delta _2$, with $\Delta_1$ either a manifold or of constant positive curvature.       Consider the induced submetry $p_1 :M\to \Delta _1$ that is given by a polar foliation 
$\mathcal F_1 '$.    Due to the preceding lemmas (\lref{easyprod}, \lref{postpone}, \cref{factor}), the  leaves of the dual foliation of $\mathcal F_1 '$ are $M_1$-factors
in a decomposition $M=M_1 \times M_2$.  

Any $\mathcal F_1 '$-horizontal geodesic  is mapped by the projection to $\Delta _1 \times \Delta _2$ into a $\Delta _1$ factor, hence by the projection $p_2 :M\to \Delta _2$ to a point. This shows that any dual leaf of $\mathcal F_1 '$ is contained  in a leaf  of the foliation $\mathcal F_2 '$ defined by the projection $p_2 :M\to \Delta _2$.   Thus the foliation
$\mathcal F_2'$  is coarser than the foliation defined by the $M_1$-factors. Hence, $p_2$ factors through  the projection 
$q_2 :M\to M_2$.   

Taking $\mathcal F_1$ to be the restriction of $\mathcal F$ to $M_1$ (any $M_1$-factor) and $\mathcal F_2$ the restriction of $\mathcal F$ to $M_2$ (any $M_2$-factor) we get $\mathcal F=\mathcal F_1 \times \mathcal F_2$. 
This contradicts to the assumption  that  $\mathcal F$ is indecomposable.

If a decomposition of $\Delta$ as above does not exist, then $\Delta$ must be either  flat, or  a manifold, or of  constant positive curvature. If it is flat then  the foliation is hyperpolar. If $\Delta$ is a manifold  then $\mathcal F$ must be given by a projection to a factor. Since $\mathcal F$ is indecomposable,  this  factor and, therefore, $\mathcal F$ must be trivial.
In the remaining case, $\Delta$ must have constant positive curvature. Then  by \lref{postpone} and \cref{factor},
the dual foliation has only one leaf.  
\end{proof}

\begin{rem}
Note, that the hyperpolar factor may be decomposed further until the quotient $\Delta$ is irreducible (\cite{Ewert}).
\end{rem}

Given a polar foliation $\mathcal F$ on $M$, we  now decompose it in indecomposable pieces. Taking trivial pieces
together we obtain a foliation given by a projection to a direct factor. Collecting hyperpolar pieces together we get a hyperpolar foliation.
Thus we arrive at:

\begin{prop}  \label{decompose}
Let $\mathcal F$ be  a polar foliation on a  non-negatively curved simply connected symmetric space $M$. 
Then we have a splitting $M=M_{-1} \times M_0\times  M_1\times ... \times M_l$, such that $\mathcal F$
is a direct product of polar foliations $\mathcal F_i$ on $M_i$.  
The foliation $\mathcal F_{-1}$ on $M_{-1}$ is given by the fibers of the projection of $M_{-1}$ onto a direct factor
of $M_{-1}$.
The foliation $\mathcal F_0$ is hyperpolar.
For $i\geq 1$,  the sections of  the foliation $\mathcal F_i$ on $M_i$ have constant positive sectional curvature; moreover, for $i\geq 1$, there is only one dual leaf of $\mathcal F_i$.
\end{prop}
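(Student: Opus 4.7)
The strategy is to decompose $\mathcal F$ into indecomposable pieces and then repackage these pieces according to the trichotomy established in the preceding proposition. The first step is to exhibit a splitting $M = N_1 \times \dots \times N_r$ and $\mathcal F = \mathcal G_1 \times \dots \times \mathcal G_r$ in which each $\mathcal G_j$ is an indecomposable polar foliation on the simply connected symmetric factor $N_j$. This is routine: whenever a factor carries a decomposable foliation, split it non-trivially; since each step strictly decreases the dimension of one factor, the process terminates after finitely many steps. (Equivalently, one may argue by induction on $\dim M$.)

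Next I would apply the preceding proposition to each $\mathcal G_j$ individually. Each falls into exactly one of three categories: (i) trivial, (ii) hyperpolar, or (iii) sections of constant positive curvature and a single dual leaf. After reindexing, the type (iii) factors directly supply the pieces $M_1, \dots, M_l$ and foliations $\mathcal F_1, \dots, \mathcal F_l$, and the properties claimed for $i \ge 1$ are inherited verbatim.

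For the type (ii) pieces, let $M_0$ be their Riemannian product and $\mathcal F_0$ the corresponding product foliation. A section of $\mathcal F_0$ is the metric product of flat sections of the hyperpolar factors, hence flat, so $\mathcal F_0$ is again hyperpolar. For the type (i) pieces, an indecomposable trivial polar foliation on a factor must be either the foliation by points or the foliation consisting of a single leaf (since in either case the proof of the preceding proposition shows that $\mathcal F$ is a projection onto a direct factor, and indecomposability forces that factor or its complement to be a point). Letting $A$ be the metric product of the ``foliation by points'' factors and $B$ the metric product of the ``single leaf'' factors, set $M_{-1} := A \times B$; the product of the trivial foliations on $M_{-1}$ then has leaves precisely $\{a\} \times B$, $a \in A$, which is the foliation by fibers of the projection of $M_{-1}$ onto the direct factor $A$. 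Together with the type (ii) and type (iii) factors this yields the desired decomposition $M = M_{-1} \times M_0 \times M_1 \times \dots \times M_l$.

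Because the preceding proposition already performs all the geometric work, this statement is essentially a bookkeeping exercise and I do not anticipate a serious obstacle. The only small points that merit a line of justification are that hyperpolarity is preserved under Riemannian products of polar foliations (immediate, since a product of flat totally geodesic submanifolds is again flat and totally geodesic) and that assembling the trivial indecomposable pieces collapses into a projection onto a single direct factor of $M_{-1}$, which is just associativity of the metric product combined with the dichotomy of indecomposable trivial polar foliations into point-foliation and single-leaf-foliation.
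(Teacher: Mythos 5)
Your proof is correct and takes essentially the same approach as the paper: decompose $\mathcal F$ into indecomposable pieces, apply the trichotomy (trivial / hyperpolar / constant-curvature sections with a single dual leaf) from the preceding proposition, and then regroup like pieces. The paper states this in a single terse sentence; your write-up merely spells out the termination of the splitting process and the bookkeeping for the trivial pieces, both of which are fine.
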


\section{New setting}  \label{5}
Due to \pref{decompose},  in order to prove \tref{mainthm} and \tref{irredthm}, we only need to study the case
in which the sections of $\mathcal F$ have constant positive curvature.

From now on, we will assume the sections of  $\mathcal F$  to have  constant positive  curvature. Hence the sections are either spheres or projective spaces.  
 We normalize  the space
such that the sections and the  quotient  have constant curvature $1$.
 Thus for any horizontal vector $v$ the geodesic in direction $v$ is closed
of period $\pi$ or of period $2\pi$. Since in a symmetric space, for a continuous variation of closed geodesics the period 
of the geodesics cannot change, we deduce that all horizontal geodesics have the same period.  This period  is equal to
$2\pi$ if all sections are spheres,  and it is equal to $\pi$ if all sections are projective spaces.

The quotient $\Delta$ is equal to $\Delta =\mathbb S^k /\Gamma$ for a spherical   Coxeter group $\Gamma$, that
must be crystallographic.  By assumption, $k\geq 2$.

\section{Horizontal metric}  \label{6}
%Here and in the sequel, $\mathcal F$ is a polar foliation on a simply connected space  $M$, such that all sections have 
%have dimension $k$  and constant curvature $1$.
\subsection{Definition}
We now define a new metric  $d^{hor}$ on our manifold $M$ by declaring $d^{hor} (x,y)$ to be the infimum
over all lengths of broken horizontal geodesics that connect $x$ and $y$. By definition $d^{hor } \geq d$. The 
dual leaf $L^\# (x)$ is exactly the set of points that have a finite distance to the point $x$.
We denote by $X$ the set $M$ with the horizontal metric $d^{hor}$.

By construction,  the identity $i:X\to M$ is  $1$-Lipschitz and the projection $p: X\to \Delta$ is still a submetry. 
Since any horizontal geodesic is contained in a section, we see that the metric space  $X$ is defined  by   gluing together  
spherical polyhedra  (each one  isometric to the quotient $\Delta$).  A pair of polyhedra  may be glued only along 
some union of faces.   By definition, $X$ is a length space and since it is a polyhedral complex with only one type of 
polyhedra, it is a geodesic space, i.e., any pair of points at a finite distance are connected by a geodesic with respect to $d^{hor}$ (cf. \cite{Bridson}, p.105).

 Given a point $x\in X$, a small ball $U_x$ around $x$  in $X$ is given by  image  of a small 
 ball in the horizontal space $H_x$  under the exponential map. (Note, however,   that the exponential map, considered as a map from $H_x $ to $X$ is not continuous).
  Consider the induced infinitesimal polar foliation $\mathcal F_x$ on the  Euclidean space $H_x$.  The sections of $\mathcal F$ through $x$ are in one-to-one correspondence with the sections of $\mathcal F_x$. Hence a small neighborhood
of $x$ in $X$ is isometric to a small ball in  the spherical suspension over  the "horizontal metric space"  $Y = (\mathbb S^r, d^{hor})$
 that is defined by the polar  foliation $\mathcal F_x$ on the unit sphere $\mathbb S^r= H_x ^1$ in $H_x$.

Thus, $X$ is a $k$-dimensional locally spherical space in the sense of \cite{charney}.  Moreover,  the space of directions $S_x X$ at each point $x\in X$ is isometric to the horizontal space defined by the  infinitesimal polar foliation
on the unit sphere $H^1 _x$. 

\subsection{Classical case and the irreducible case}
 We are going to use the following result due to Immervoll and Thorbergsson (\cite{immer}, \cite{Th2}): 

\begin{prop} \label{sphbuild}
Let $\mathcal G$ be a polar foliation on the round sphere $\mathbb S^r$. Let $C$ be the quotient $C = \mathbb S^r /\mathcal G$.
If the Coxeter polyhedron $C$ does not have dihedral angles equal to $\pi/6$ then the horizontal space
$Y=(\mathbb S^r,d^{hor})$ defined by the foliation is a spherical building.
\end{prop}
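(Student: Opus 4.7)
The plan is to verify the metric building axioms directly, taking the sections of $\mathcal{G}$ as apartments. Each section $\Sigma \subset \mathbb{S}^r$ is a totally geodesic great subsphere, isometric to the round $\mathbb{S}^k$ where $k = \dim C$; since every horizontal geodesic lying in $\Sigma$ is already a geodesic of $\Sigma$, the restriction of $d^{hor}$ to $\Sigma$ recovers the standard round metric, so $\Sigma \hookrightarrow Y$ is an isometric embedding. These embeddings, together with the Coxeter group $\Gamma$ coming from $C = \mathbb{S}^k/\Gamma$, form the candidate apartment system of type $\Gamma$.

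Two axioms then remain: (A1) any two points of $Y$ lie in a common apartment, and (A2) transition maps between two apartments sharing a common point are restrictions of elements of $\Gamma$. I would handle (A2) first, as it is essentially infinitesimal: at a common point $x$, the sections of $\mathcal{G}$ through $x$ are in bijection with the sections through the origin of the tangent polar foliation $T_x\mathcal{G}$ on $T_x\mathbb{S}^r$, and the tangent Coxeter group acts simply transitively on the latter. One checks that the resulting change-of-apartment map between $\Sigma_1,\Sigma_2 \cong \mathbb{S}^k$ is realized by an element of $\Gamma$.

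For (A1), I would prove the polar transitivity statement that through any two points $x,y \in \mathbb{S}^r$ there passes a common section. Lifting the geodesic of $C$ from $p(x)$ to $p(y)$ horizontally yields a broken horizontal path from $x$ to some point $y' \in L(y)$ contained in a single section $\Sigma_1$; one then moves $y$ into $\Sigma_1$ using the slice representation at $y'$, whose reflection group extends to an element of $\Gamma$ acting on the apartment. Wilking's single-dual-leaf theorem on $\mathbb{S}^r$ and the simple connectedness of the sphere are what guarantee enough horizontal connectivity and rigidity of lifts for these constructions to succeed.

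The main obstacle is upgrading the natural \emph{local} building structure on $Y$ to a \emph{global} one. Locally near any $x$, the space $Y$ is a spherical suspension over the horizontal space of $T_x\mathcal{G}$, which by induction on codimension is itself a spherical building; so $Y$ automatically carries a local building atlas. The hypothesis excluding dihedral angles $\pi/6$ eliminates the $G_2$ local Coxeter type, which is precisely where the Neumaier exotic two-dimensional local buildings appear --- these are the examples that fail to be covered by any global building. In the absence of $\pi/6$, Tits' local-to-global theorem for spherical buildings (in its metric formulation, as in \cite{charney}) ensures that the local building atlas glues coherently on the simply connected space $Y$, yielding a genuine global building of type $\Gamma$.
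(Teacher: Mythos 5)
The paper does not actually prove Proposition~\ref{sphbuild}: it is stated as a cited result of Immervoll and Thorbergsson (\cite{immer}, \cite{Th2}), so there is no internal proof to compare against. Judged on its own, your proposal has genuine gaps.

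The most serious one is in your treatment of axiom (A1). You propose to take the sections of $\mathcal G$ as the apartment system and to show ``through any two points $x,y\in\mathbb S^r$ there passes a common section.'' This is false for any nontrivial polar foliation. Already for the $\mathrm{SO}(2)$ rotation action on $\mathbb S^2$, the sections are the meridian great circles, and two points at the same latitude but different longitudes lie on no common meridian. In the horizontal space $Y$, apartments are $d^{hor}$-isometric copies of $\mathbb S^k$, and generically these are \emph{broken} objects assembled from chambers lying in many different sections; producing enough of them is exactly the hard content of the Immervoll--Thorbergsson argument. The sentence ``one then moves $y$ into $\Sigma_1$ using the slice representation, whose reflection group extends to an element of $\Gamma$ acting on the apartment'' is circular: an isometry of $Y$ fixing a half-apartment and moving $y'$ to $y$ is precisely the Moufang-type structure you are trying to establish, not something available beforehand.

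The local-to-global part does not work either, for two reasons. First, the exotic $2$-dimensional locally spherical complexes (rank-3 chamber systems not covered by any building, of the kind discovered by Neumaier) are of type $C_3$, not $G_2$; a $C_3$ polyhedron has dihedral angles $\pi/2,\pi/3,\pi/4$ and no $\pi/6$, so the hypothesis on angles does not exclude them. Excluding $\pi/6$ only rules out $I_2(6)$ (i.e.\ $G_2$) direct factors of the Coxeter group; it is a hypothesis that feeds into the geometry of the isoparametric argument, not into the combinatorial local-to-global theorem. Second, the local-to-global theorem of \cite{charney} used elsewhere in the paper requires dimension $k\geq 3$; it simply does not apply to codimension two, which is the very case where the angle hypothesis is nonvacuous. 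Finally, even where local-to-global is available, it produces a building as the \emph{universal cover} of $Y$; that $Y$ itself (with the finer $d^{hor}$-topology) is simply connected is not obvious and would require a separate argument.
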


\begin{rem}
The conclusion of the previous lemma is true without any assumptions on the angles, if the foliation comes from a group action, in which case \pref{mainred} below  is a direct  consequence of \pref{sphbuild}. 
\end{rem}

Let  again $M$ be a symmetric space with our polar foliation $\mathcal F$ and quotient $\Delta$ of dimension $k\geq 2$.
We say that $\Delta$ is \emph{irreducible}, if  the corresponding spherical  Coxeter group $\Gamma$ is irreducible. Otherwise, we say that $\Delta$ is \emph{reducible} and find  a non-trivial  decomposition 
$\Delta =\Delta _1 \ast \Delta _2$ of $\Delta$ as a spherical join.

 If $\Delta$ is irreducible, then $\Delta$ does not have faces meeting at the  dihedral angle $\pi/6$.  Therefore we conclude:

\begin{lem}
If $\Delta$ is irreducible  then for any point  $x$ in the  horizontal space $X$, a small neighborhood
 of $x$ is isometric to an open subset of a  spherical building (possibly depending  on the point).
\end{lem}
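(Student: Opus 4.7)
The plan is to combine the local model for $X$ recorded just above the lemma with the theorem of Immervoll--Thorbergsson cited as Proposition \ref{sphbuild}. Fix $x\in X$. By the discussion in \secref{6}, a small neighborhood $U_x$ of $x$ in $X$ is isometric to an open ball in the spherical suspension over the horizontal sphere $Y = (\mathbb{S}^r, d^{hor})$ defined by the infinitesimal polar foliation $\mathcal F_x$ on $\mathbb S^r = H_x^1$. Since the spherical suspension of a spherical building is again a spherical building (cf.\ \secref{2}), it suffices to show that $Y$ itself is a spherical building.

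By Proposition \ref{sphbuild}, this reduces to the angle condition: the Coxeter polyhedron $C = \mathbb{S}^r/\mathcal F_x$ must have no dihedral angle equal to $\pi/6$. Set $y = p(x)\in \Delta$. The quotient $C$ is naturally identified with the link of $y$ in $\Delta = \mathbb{S}^k/\Gamma$: the walls of $C$ are the infinitesimal directions of the walls of $\Delta$ through $y$, and its dihedral angles at codimension-two faces are precisely the dihedral angles of $\Delta$ at the codimension-two faces containing $y$. In particular, every dihedral angle of $C$ appears as a dihedral angle of $\Delta$.

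Now we invoke the hypothesis. Since $\Gamma$ is irreducible and $k\geq 2$, the single irreducible factor of $\Gamma$ has dimension at least two, so by the discussion of crystallographic Coxeter groups in \secref{2} no dihedral angle of $\Delta$ equals $\pi/6$. Combined with the previous paragraph, no dihedral angle of $C$ equals $\pi/6$, so Proposition \ref{sphbuild} applies to $\mathcal F_x$. Hence $Y$ is a spherical building, its spherical suspension is a spherical building, and $U_x$ is isometric to an open subset of it. No serious obstacle arises: the whole argument is a local reduction to the already-established Proposition \ref{sphbuild}, with the angle check furnished by the irreducibility of $\Gamma$ together with $k\geq 2$.
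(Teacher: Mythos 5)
Your proof is correct and follows the same route as the paper: identify a small neighborhood of $x$ with a ball in the suspension over the horizontal space $Y$ of the infinitesimal foliation $\mathcal F_x$, note that the Coxeter polyhedron $C=\mathbb S^r/\mathcal F_x$ is the space of directions of $\Delta$ at $p(x)$ so its dihedral angles are among those of $\Delta$, observe that an irreducible crystallographic Coxeter group on $\mathbb S^k$ with $k\geq 2$ has no $\pi/6$ angle, and then apply \pref{sphbuild}. The paper states all this more tersely (the lemma is presented as an immediate consequence of the two preceding paragraphs), but the content is identical; your version simply spells out the identification of $C$ with the link of $p(x)$ and the angle-inheritance step that the paper leaves implicit.
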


\begin{rem}
Just to avoid confusion, we remark that in our convention the suspension over a building is again a building of one dimension larger. 
\end{rem}

Applying \cite{charney} we deduce:

\begin{cor} \label{mainirred}
Assume that $\Delta$ is  irreducible.  Let $X'$ be any dual leaf of $\mathcal F$ with the metric induced from $X$. Then $X'$ has diameter  at most $\pi$. If $k=\dim (\Delta ) \geq 3$ then the universal covering of $X'$ is a spherical building. 
\end{cor}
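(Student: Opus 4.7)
My plan is to derive the corollary as a direct application of the local-to-global theorem for spherical buildings proved in \cite{charney}. The preceding lemma already identifies the local model: since $\Delta$ is irreducible, every point of $X$ has a neighborhood isometric to an open subset of a $k$-dimensional spherical building, and this restricts to a local model on $X'$.

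To invoke \cite{charney}, I would first verify that $X'$ is a complete geodesic metric space. Completeness in this setting reduces to \lref{postpone}, which asserts that the dual leaves are compact under our standing assumption that the sections of $\mathcal F$ have constant positive curvature. That $X'$ is a geodesic space follows because $X$ itself is a geodesic polyhedral complex of spherical cells, and any two points of $X'$ lie at finite $d^{hor}$-distance by definition; minimizing among broken horizontal geodesics between them yields an actual $d^{hor}$-geodesic that remains in $X'$. I would then pass to the universal cover $\tilde X'$: since covering projections preserve local isometries and completeness, $\tilde X'$ is a simply connected, complete, geodesic space locally modeled on spherical buildings of dimension $k$. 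In the range $k \geq 3$, the main theorem of \cite{charney} then asserts that $\tilde X'$ is globally a spherical building, giving the second conclusion.

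The diameter bound is immediate in the range $k \geq 3$: any two points of a spherical building lie in a common apartment isometric to $\mathbb S^k$, so $\diam (\tilde X') = \pi$, and since the covering projection is $1$-Lipschitz, $\diam (X') \leq \pi$. For $k = 2$, where \cite{charney} does not apply, I would argue directly: at each breakpoint of a broken horizontal geodesic the local building axiom allows one to combine two consecutive segments into a path inside a single section, and iterating this reduction eventually brings the path into one section, whose diameter is $\pi$.

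The main obstacle I expect is precisely this $k = 2$ diameter argument. Without a global building structure, the local apartment-chaining need not terminate after finitely many steps in a single apartment, so an additional combinatorial or compactness input seems needed; by contrast, the building conclusion for $k \geq 3$ is a clean application of \cite{charney} once the preliminary topological verifications are in place.
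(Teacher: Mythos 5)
Your overall plan is the same as the paper's (the paper's entire proof is ``Applying \cite{charney} we deduce:''), but there is a genuine circularity in the way you propose to supply the completeness hypothesis. You appeal to \lref{postpone} to get completeness of $X'$. However, \lref{postpone} is exactly the lemma whose proof the paper postpones to Section~\ref{6} and then proves \emph{using} \cref{mainirred}: see the subsection ``The dual foliation,'' which begins ``We have seen in \cref{mainirred} that $X'$ has diameter at most $\pi$\dots'' and concludes compactness from there. So you cannot use \lref{postpone} to prove \cref{mainirred}. Moreover, even granting \lref{postpone}, it gives compactness of $L^\#$ in the manifold metric $d$, not immediately completeness in the finer metric $d^{hor}$. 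The completeness of $X'$ that \cite{charney} requires actually comes for free from the polyhedral complex structure: $X$ (hence each connected component $X'$) is a spherical polyhedral complex with a single shape of cell, namely $\Delta$, so the Bridson--Haefliger theorem already invoked in the preceding subsection (\cite{Bridson}, p.~105) gives that it is a complete geodesic space. That is the input you should use, and it carries no dependence on \lref{postpone}.

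A secondary point: the diameter bound for $k=2$ is also part of the theorem of \cite{charney}, not something you need to argue by hand. Their result says that a complete geodesic space of dimension $\geq 2$ locally modeled on spherical buildings has diameter at most $\pi$, and additionally that for dimension $\geq 3$ the universal cover is a spherical building; only the second conclusion needs $k\geq 3$. So the ad hoc apartment-chaining argument you sketch for $k=2$ (and rightly worry might not terminate) is unnecessary. Once you replace the completeness source by the polyhedral complex fact and read the full statement of \cite{charney}'s theorem, the corollary follows exactly as in the paper.
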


\subsection{Reducible case}  
We are going to prove that in the reducible case, our manifold $M$  possesse two submanifolds that behave like 
a projective subspace and its cut locus in a projective space.

\begin{prop} \label{mainred}
Let $\Delta$ be reducible $\Delta = \Delta _1 \ast \Delta _2$. Let $A_i = p^{-1} (\Delta _i)$ and let $\mathcal F'$
be the polar foliation given by the submetry $p':M\to [0,\pi /2]$, with $p'(x)=d(A_1,x)$.
 Then  for any pair of points $x_i \in A_i$ that are contained
in the same dual leaf $ L^\#$ of $\mathcal F$, we have $d(x_1,x_2) =\pi /2$.
\end{prop}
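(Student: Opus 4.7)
I would prove the equality by separately establishing the two bounds.

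For the lower bound I use that $p\colon M\to\Delta$ is a $1$-Lipschitz submetry; since $\Delta=\Delta_1 \ast \Delta_2$ is a spherical join, every point of $\Delta_1$ is at distance $\pi/2$ from every point of $\Delta_2$, so $d(x_1,x_2)\geq d_\Delta(p(x_1),p(x_2))=\pi/2$, and the same reasoning gives $d^{hor}(x_1,x_2)\geq \pi/2$.

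For the upper bound I aim to show $d^{hor}(x_1,x_2)\leq \pi/2$; combined with $d\leq d^{hor}$ this yields $d(x_1,x_2)\leq \pi/2$ and completes the proof. By \lref{late} the foliations $\mathcal F$ and $\mathcal F'$ share dual leaves, and invoking \pref{connected}, \cref{factor} and \lref{postpone} I would reduce to the case in which the common dual leaf is all of $M$. I then lift the unique minimizing geodesic $\alpha$ of length $\pi/2$ in $\Delta$ from $p(x_1)$ to $p(x_2)$ to a broken $\mathcal F$-horizontal path in $M$ starting at $x_1$. Any such lift has total length $\pi/2$ and ends in the leaf $L(x_2)=p^{-1}(p(x_2))$; the issue is to arrange the endpoint to equal $x_2$ itself and not merely some other preimage of $p(x_2)$.

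The main obstacle is this last step. To handle it I would exploit the reducibility $\Delta=\Delta_1 \ast \Delta_2$: at every point of $M$ the infinitesimal polar foliation inherits the corresponding product decomposition, so the local horizontal space at each singular passage of $\alpha$ is a spherical join with one factor corresponding to $\Delta_2$, providing a rich family of section choices at each wall crossing. Using the single-dual-leaf reduction together with the simple connectedness of $M$, this local join structure assembles into the global statement that the set of possible endpoints of such lifts of $\alpha$ starting at $x_1$ is closed, $\mathcal F$-saturated and open in $L(x_2)$, hence equals $L(x_2)$. In particular $x_2$ is reached by some lift, giving $d^{hor}(x_1,x_2)\leq \pi/2$ as desired.
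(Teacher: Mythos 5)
Your lower bound is fine and essentially for free (a submetry is $1$-Lipschitz, and $\Delta_1$ and $\Delta_2$ are at mutual distance $\pi/2$ in the join). The problems are in the upper bound.

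The most serious gap is that your proposed reduction is circular. You invoke \lref{postpone} (and \cref{factor}, which in turn relies on \pref{complete} and hence on completeness of the dual leaves) to reduce to the case where the common dual leaf is all of $M$. But in the paper \lref{postpone} is exactly the statement whose proof is postponed to Section~\ref{6}, and in the reducible case that postponed proof \emph{uses} \pref{mainred}. So you cannot assume completeness of dual leaves, nor that dual leaves are factors, while proving \pref{mainred}. At this stage of the argument you only know that a dual leaf lies inside some totally geodesic factor $Z$ of the same dimension (\pref{connected}); you do not yet know it fills $Z$.

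Even setting aside the circularity, the core step --- that the set $S$ of endpoints of length-$\pi/2$ lifts of the minimizing geodesic $\alpha$ starting at $x_1$ is ``closed, $\mathcal F$-saturated and open in $L(x_2)$'' and hence all of $L(x_2)$ --- is asserted, not established. Since $L(x_2)$ is a single leaf, ``$\mathcal F$-saturated'' carries no content there, and openness of $S$ in the leaf is precisely the hard point: lifts of $\alpha$ are constrained at each wall crossing, and without a concrete mechanism for moving the endpoint around the leaf there is no reason for $S$ to be open. The paper sidesteps this issue entirely. It works with the coarser foliation $\mathcal F'$ of \lref{late}, whose horizontal metric $d'_{hor}$ is a graph metric with edges of length $\pi/2$ alternating between $A_1$ and $A_2$, assumes a minimal counterexample at $d'_{hor}$-distance $3\pi/2$, performs a local ``surgery'' of the broken geodesic at its midpoint $x_+\in A_2$ using the splitting of $H_{x_+}$ and the fact that a nontrivial polar foliation on a round sphere has a single dual leaf, and then propagates the distance $\pi/2$ back along a broken horizontal geodesic $\eta\subset A_1$. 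That propagation rests on the key observation you do not use: at a point $y\in A_1$, any $\mathcal F$-horizontal vector tangent to $A_1$ together with any $\mathcal F'$-horizontal (i.e.\ orthogonal to $A_1$) vector lie in a common section of $\mathcal F$, because the infinitesimal quotient at $y$ is the tangent cone of the join $\Delta_1\ast\Delta_2$. Without this, the ``rich family of section choices'' in your sketch remains a heuristic rather than an argument.

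In short: drop the appeal to \lref{postpone}/\cref{factor}, and replace the open-closed argument with the explicit section-sharing property along $A_1$ (or an equivalent concrete mechanism); as written, the upper bound is not proved.
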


\begin{proof}
 Since $k=\dim (\Delta ) \geq 2$, at least one $\Delta _i$ is not a point. Without loss of generality, let 
$\Delta _1$ have positive dimension.   Due to \lref{late},
the  dual leaves of $\mathcal F$ and $\mathcal F'$ coincide.  The horizontal metric $d' _{hor}$ induced by 
$\mathcal F'$ on $M$   is the metric on a graph with each edge being a horizontal geodesic from $A_1$ to $A_2$ of length $\pi/2$.

Thus, if the claim of the proposition is wrong, we find $x_1 \in A_1$ and $x_2 \in A_2$ such that there is a shortest geodesic
$\gamma$ with respect to $d' _{hor}$ from $x_1$ to $x_2$ that has length $3\pi /2$.  Let $x_+$ be $\gamma (\pi /2) \in A_2$  and let $x_-$ be $\gamma (\pi ) \in A_1$.

 Consider the polar foliation $T_{x _+} \mathcal  F$ on the Euclidean space $H_{x_+}$.   The quotient is given by the tangent cone to $\Delta$ at a point of $\Delta _2$. Hence it splits as a product of the tangent space to 
$ \Delta _2$ (that may be trivial)
and the orthogonal complement $Q$.  This implies a corresponding splitting of $H_{x_+}$, into a part tangent to 
$A_2$ and the part $H'$ of all $\mathcal F'$-horizontal vectors.    Moreover, we have
$H' /T_{x_+}  \mathcal F = Q$.  
Since the restriction of $\mathcal F'$ to the unit sphere of $H'$ is non-trivial,  it has only one dual leaf.
Thus we find a broken horizontal  geodesic in this sphere that connects the incoming and outgoing direction of $\gamma$ at $x_+$.  Exponentiated to the length $\pi /2$, we obtain a  broken $\mathcal F$-horizontal geodesic $\eta:[s,t] \to A_1$  that connects $x_1$ with $x_-$.   

 But, at any point $y\in A_1$, any pair of $\mathcal F$-horizontal vectors $h,v \in  T_y M$ are tangent to a section of 
$\mathcal  F$, whenever $v$ is tangent to $A_1$ and $h$ orthogonal to $A_1$ (i.e., $h$ is $\mathcal F'$-horizontal).
Therefore, if $d(x_2, \eta (r)) =\pi /2$, for some $r\in (s,t]$, then  $x_2, \eta (r)$ and $\eta (r-\epsilon )$ are contained in some section of $\mathcal F$.  Thus $d(x_2,\eta (r-\epsilon ))=\pi /2$ as well. Running $\eta$ backwards from $x_-$ to
$x_1$ we deduce $d(x_1,x_2)=\pi /2$. 
\end{proof}

\subsection{The dual foliation}
We are going to prove \lref{postpone} now.

First, let us assume that $\Delta$ is irreducible.  
Take a dual  leaf $X' =L^\# (x)$ of $\mathcal F$. We have seen in \cref{mainirred}  that $X'$ has diameter at most $\pi$
with respect to $d^{hor}$. Since $X$ consists of  simplices of the same size, any point of the dual leaf $L^\# (x)$ can be connected with $x$ by a broken horizontal geodesic with at most $n$ breaks (for some $n$ depending only on $\Delta$), of total length at most $\pi$. Since a limit of a sequence of such broken horizontal geodesic is again a broken horizontal geodesic, we deduce that $L^\#$ is compact. 

If the quotient $\Delta$ is reducible as spherical join, the conclusion follows in the same way, using   \pref{mainred}.

\subsection{More conclusions} 
After having closed the gap  \lref{postpone}   the proof of \pref{decompose} is complete.  We deduce:

\begin{cor}
If the foliation $\mathcal F$ is indecomposable, there is  only one dual leaf.
\end{cor}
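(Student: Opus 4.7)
My plan is to derive this as a clean consequence of the product decomposition \pref{decompose}, whose proof is now complete thanks to \lref{postpone}.

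First I would invoke \pref{decompose} to write $M=M_{-1}\times M_0\times M_1\times\cdots\times M_l$ and $\mathcal F=\mathcal F_{-1}\times\mathcal F_0\times\mathcal F_1\times\cdots\times\mathcal F_l$. The central elementary observation I will use is that in a metric product the horizontal distribution of the product polar foliation is the orthogonal sum of the horizontal distributions of the factors, so any broken horizontal geodesic in $M$ is nothing but a tuple of broken horizontal geodesics in the $M_i$. Consequently the dual leaves of $\mathcal F$ are precisely products of dual leaves of the $\mathcal F_i$, and $\mathcal F$ has a unique dual leaf if and only if each $\mathcal F_i$ does.

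For $i\geq 1$, uniqueness of the dual leaf is part of \pref{decompose} itself, so the remaining task is to show that indecomposability of $\mathcal F$ rules out the factors $\mathcal F_{-1}$ and $\mathcal F_0$ being genuinely present. For $\mathcal F_{-1}$, viewed as the fibers of a projection $M_{-1}=A\times B\to A$, I will point out the tautological splitting of $\mathcal F_{-1}$ as the product of the pointwise foliation on $A$ and the one-leaf foliation on $B$; whenever both $A$ and $B$ are non-trivial this is a non-trivial polar decomposition of $\mathcal F$, contradicting the hypothesis. For the hyperpolar piece $\mathcal F_0$, assuming it has more than one dual leaf, I would apply \cref{factor} to obtain a splitting $M_0=M_0'\times M_0''$ whose $M_0''$-factors are exactly the dual leaves, and then verify directly that $\mathcal F_0$ decomposes along this splitting as a polar foliation on $M_0'$ together with the pointwise foliation on $M_0''$, once again contradicting indecomposability.

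The technical heart of the argument is thus the bookkeeping for the $\mathcal F_{-1}$ and $\mathcal F_0$ pieces; the positive-curvature factors $\mathcal F_i$ with $i\geq 1$ cause no trouble, being handled by \pref{decompose} directly. Once these two reductions are in place, the conclusion falls out immediately from the product formula for dual leaves.
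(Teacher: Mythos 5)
Your overall strategy---feed $\mathcal F$ into \pref{decompose} and use the product formula for dual leaves---is the route the paper implicitly intends, but the way you handle the factors $\mathcal F_{-1}$ and $\mathcal F_0$ leaves two real gaps, both of which are closed only by invoking the standing assumptions of Section \ref{5} (sections of $\mathcal F$ have constant positive curvature, and $\dim\Delta=k\geq 2$), which you never mention. For $\mathcal F_{-1}$ you only rule out the case where $A$ and $B$ are \emph{both} non-trivial; if $A$ is trivial and $B=M_{-1}$ is not, $\mathcal F_{-1}$ is the one-leaf foliation, every point is its own dual leaf, and the corollary as you have set it up would be false. That degenerate case is excluded precisely by $k\geq 2$. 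For $\mathcal F_0$ the application of \cref{factor} is unjustified: that corollary needs the dual leaves of $\mathcal F_0$ to be \emph{complete}, and at this point in the paper completeness has only been established (via \lref{postpone}) for foliations whose sections have constant positive curvature, not for hyperpolar ones. (Also, the decomposition you describe is backwards: if the $M_0''$-factors are the dual leaves, then the horizontal distribution lies entirely in the $M_0''$-direction, so the leaves of $\mathcal F_0$ \emph{contain} the $M_0'$-factors, and the splitting is a product of the one-leaf foliation on $M_0'$ with the induced foliation on $M_0''$.) Again, under the Section \ref{5} hypotheses the factor $M_0$ is trivial---flat sections are incompatible with constant positive curvature of dimension $\geq 2$---so the case simply does not arise.

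Because you are working under the Section \ref{5} assumptions anyway, a more economical argument bypasses \pref{decompose} entirely: \lref{postpone} says the dual leaves of $\mathcal F$ are compact, hence (by \pref{connected}) factors of $M$; \cref{factor} then produces a splitting $M=M_1\times M_2$ with dual leaves the $M_2$-factors; every horizontal direction is tangent to $M_2$, so the leaves of $\mathcal F$ contain all $M_1$-factors and $\mathcal F$ decomposes as the one-leaf foliation on $M_1$ times a polar foliation on $M_2$; indecomposability forces $M_1$ to be a point, giving exactly one dual leaf. I would recommend either this shorter route, or at least stating explicitly that the standing assumptions kill the factors $M_{-1}$ and $M_0$ before arguing about them.
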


{\bf From now on, in addition to our assumptions from section \ref{5},  $\mathcal F$ will be  indecomposable as a product}.

\section{Polars and meridians} \label{7}
We assume here that $\Delta$ is reducible as a spherical join $\Delta = \Delta _1 \ast \Delta _2$, and are going to prove  that $M$ has rank $1$.  

Set $A_i = p^{-1} (\Delta _i)$. Using \pref{mainred} we know that $A_i$ are smooth manifolds, and  we have 
$d(x_1,x_2)=\pi /2$, for all $x_i \in A_i$.
Since there is only one dual leaf of $\mathcal F$, any point $x$ in $M$ lies on a unique shortest geodesic from $A_1$ to $A_2$.  Finally, any geodesic that
starts horizontally on $A_1$ is closed of period $\pi $ or of period $2\pi$.

We are going to use a few easy facts about  \emph{polars} and \emph{meridians} (\cite{Nagano}, \cite{Chen}). 
Recall that in a symmetric space $M$, a \emph{polar} of a point $o$ is a connected component of the fixed point set
of the geodesic symmetry $s_o$ at the point $o$. The \emph{meridian}  $M^- (p)$ through a point $p$ in a polar $M^+$
of $o$ is the component through $p$ of the fixed point set of the isometry $s_o \circ s_p$.   
We recall that the tangent spaces at $p$  of $M^+$ and $M^-$ are complementary orthogonal subspaces.
Moreover, the rank of the meridian $M^-$ is equal to the rank of $M$.

Assume  first that all sections of $\mathcal F$ are projective spaces, i.e., all horizontal geodesics are closed of period $\pi$. Then, for any $o\in A_1$, the reflection $s_o$ at $o$ must leave
$A_2$ pointwise fixed. Choose any point $p\in A_2$. Then $p$ is contained in a polar $M^+$ of $o$, that contains $A_2$.
Thus the tangent space to the meridian $M^- (p)$ through $p$ is contained in the orthogonal space to $A_2$.  Thus,
in the symmetric space $M^{-} (p)$, all geodesics starting at $p$ are closed. Hence $M^{-} (p)$ has rank one.
 Therefore  the  rank of $M$ must be $1$  as well.

Assume now that all sections are spheres. For any $o\in A_1$, the geodesic symmetry $s_o$ leaves $A_2$ 
invariant, but no point in $A_2$ is fixed by $s_o$. Therefore, $s_o (A_1)= A_1$ as well.
Moreover, all polars of $o$ must be contained in $A_1$.  Let $p\neq o$ be fixed by $s_o$ and let  $M^- (p)$ be the meridian through $p$. Since the polar $M^+ (p)$  is  contained in $A_1$, the normal space to $A_1$ is tangent to the meridian. Hence, the meridian contains  $A_2$.   In the meridian $M^- :=M^-(p)$, the point $p$ is a \emph{pole} of $o$, meaning  that in the symmetric space $M^{-}$, the point $p$ is a one-point  polar of $o$. In such a case  there is a two-to-one covering 
$c: M^- \to M_1$, such that  $M_1$ is symmetric and $c$ sends $o$ and $p$ to the same point $\bar o$ (\cite{Nagano}). 
 In $M_1$, the projections of horizontal geodesics
starting in $\bar o$ have period $\pi$.  Hence a  polar of $\bar o$ inside  $M_1$ contains the image 
$\bar A_2$ of $A_2$.  Therefore, the meridian  in $M_1$  through any point of $\bar A_2$ is orthogonal to $\bar A_2$.
Thus all geodesics in this meridian are closed and it must have rank 1. Due to \cite{Nagano}, $M_1$ and hence $M$ must have 
rank 1 as well.

\section{Topological  buildings}  \label{8}
We assume here that $\Delta  =\mathbb S^k /\Gamma$ is an  irreducible Coxeter simplex. Moreover, we assume that the universal covering $\tilde X$ is a building.
Due to \cref{mainirred}, the last assumption is always fulfilled if $k\geq 3$.   Again we are going to prove that $M$ has rank $1$.

We denote by $K$ the fundamental group of $X$ acting on $\tilde X$ by deck transformations. By 
$\pi: \tilde X \to X$   we denote the  projection. We are going to define a compact $K$-invariant topology $\mathcal T$ on 
$\tilde X$.

 In order to do so, we will use the following construction several times. Let $N$ be a compact, geodesic, simply connected metric space (for us an interval or a disc). Let $h_i:N\to \tilde X$ be a sequence of uniformly Lipschitz maps. 
Consider the projections $\bar h_i = \pi \circ h_i :N\to \tilde X \to X \to M$.  Then as Lipschitz maps to the compact manifold $M$ the sequence is equicontinuous and we find a subsequence converging to a Lipschitz map $\bar h :N\to M$.
 Since all $\bar h_i$ map a small  neighborhood of any point  into the union of sections through the image, the same is true 
for  $\bar h$. Thus $\bar h$ is in fact a Lipschitz map to $X$. Assuming that all $h_i$ send a base point of 
$N$ to the same point $q\in \tilde X$ we  have a unique lift of $\bar h$ to a Lipschitz map $h:N\to \tilde X$ sending the base point 
of $N$ to $q$.  We will say that the sequence $h_i$ \emph{weakly subconverges} to $h$.

To define the topology $\mathcal T$, we first fix a point $q\in \tilde X$.  We will say that a point $p \in \tilde X $ is contained in the  $\mathcal T$-closure of  a subset $C\subset \tilde X$ if and only if for some  sequence $p_n\in C$ there is a curve $\gamma$ from $q$ to $p$ and  a sequence of shortest geodesics $\gamma _n$ from $q$ to $p_n$, such that
$\gamma _n$ weakly converges to $\gamma$.

 From the Theorem of Arzela-Ascoli  we see that the topology $\mathcal T$ is  sequentially compact. Moreover,  $(\tilde X, \mathcal T)$ has a dense 
countable subset.
We are going to prove that $\mathcal T$ is Hausdorff and does not depend on the base point $q$.

\begin{lem}
Let $p_n$ converge to $p$ in  $\mathcal T$. Let $\gamma ' _n :[0,1] \to \tilde X$ be a curve of length $\leq L <\infty$
from $q$ to $p_n$, parametrized proportionally to arc length. Let $\gamma ' :[0,1 ]\to \tilde X $ be a weak limit of
 $\gamma ' _n$.   Then $\gamma '$ ends  in $p$.
\end{lem}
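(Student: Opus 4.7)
The plan is to reduce the lemma to the assertion that weak limits of loops based at $q$ remain loops at $q$, and then to prove this by a local argument near $q$ using the covering structure of $\pi:\tilde X\to X$.

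After passing to subsequences, I may assume $\gamma'_n$ weakly subconverges to $\gamma'$; using $p_n\to p$ in $\mathcal T$ and passing to a common subsequence, I also extract shortest horizontal geodesics $\gamma_n:[0,1]\to\tilde X$ from $q$ to $p_n$ weakly converging to $\gamma$ with $\gamma(1)=p$. The $\gamma_n$ have uniformly bounded length, since $\tilde X$ is a spherical building of diameter at most $\pi$. I then form the loops $\mu_n:[0,1]\to\tilde X$ based at $q$ by traversing $\gamma_n$ on $[0,1/2]$ followed by $\overline{\gamma'_n}$ on $[1/2,1]$. These are Lipschitz with a common Lipschitz bound $C$, so along a further subsequence they weakly subconverge to a Lipschitz curve $\mu:[0,1]\to\tilde X$ with $\mu(0)=q$; by construction $\mu|_{[0,1/2]}=\gamma$ (so $\mu(1/2)=p$), and $\mu|_{[1/2,1]}$ is the unique Lipschitz lift of $\overline{\pi\circ\gamma'}$ starting at $p$. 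A direct deck-transformation computation shows $\mu(1)=q$ if and only if $\gamma'(1)=p$, so it suffices to prove $\mu(1)=q$.

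To prove $\mu(1)=q$, I choose $\epsilon>0$ so small that $\pi$ restricts to an isometric homeomorphism of $V:=B^{d^{hor}}(q,2\epsilon)\subset\tilde X$ onto $\pi(V)\subset X$, with $\pi^{-1}(\pi(V))=\bigsqcup_{h\in K}h\cdot V$ a disjoint union of sheets. The Lipschitz bound on $\mu_n$ together with $\mu_n(1)=q$ yields $\mu_n([1-\delta,1])\subset V$ for $\delta\le\epsilon/C$ and all $n$. Since $\pi\circ\mu$ is Lipschitz as a map into $X$ with value $\pi(q)$ at $t=1$, shrinking $\delta$ further gives $\pi\circ\mu([1-\delta,1])\subset\pi(V)$. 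If the sheet of $\pi^{-1}(\pi(V))$ containing $\mu([1-\delta,1])$ is $V$ itself, then $\mu(1)\in V\cap\pi^{-1}(\pi(q))=\{q\}$, so $\mu(1)=q$.

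The step I expect to be the hardest is this sheet identification. My plan is to fix $t_0=1-\delta$ and argue that the sequence $\mu_n(t_0)\in V$ projects under $\pi$ to a sequence converging in $M$ to $\pi(\mu(t_0))\in\pi(V)$, and to use the homeomorphism $\pi|_V$ together with the confinement of both $\mu_n(t_0)$ and $\mu(t_0)$ to the single small chart to lift this convergence back to $\tilde X$, giving $\mu_n(t_0)\to\pi|_V^{-1}(\pi\circ\mu(t_0))\in V$, hence $\mu(t_0)\in V$. The subtlety is that the $M$- and $X$-topologies on $\pi(V)$ a priori differ, but inside the chart $V$ the Lipschitz-in-$X$ structure of the weak limit together with the confinement of $\mu_n$ to $V$ should force agreement at the limit point. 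Continuity of $\mu$ then keeps it inside $V$ on the whole interval $[1-\delta,1]$, yielding $\mu(1)=q$ and completing the proof.
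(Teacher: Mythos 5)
The key reduction you make—that it suffices to show the weak limit $\mu$ of the concatenated loops $\mu_n$ (go out along $\gamma_n$, come back along $\overline{\gamma'_n}$) closes up at $q$—is correct and is exactly where the paper's argument also heads, phrased as a concatenated curve $r_n:S^1\to\tilde X$. But your step that is supposed to prove $\mu(1)=q$ has a genuine gap, and you are right to flag the ``sheet identification'' as the hard part, because it does not work as written.

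The problem is that all the analytic convergence you have is convergence of the projections $\bar\mu_n\to\bar\mu$ in $(M,d)$, while the covering $\pi:\tilde X\to X$ and the chart $V$ live in the metric $d^{hor}$, which is strictly larger than $d$ and induces, in general, a strictly finer topology. The paper even warns that the exponential map $H_x\to X$ is not continuous, i.e., the identity $X\to M$ fails to be a local homeomorphism at singular points. Consequently $\pi\vert_V^{-1}:\pi(V)\to V$ is continuous only with respect to $d^{hor}_X$, and from $\bar\mu_n(t_0)\to\bar\mu(t_0)$ in $M$ you cannot conclude $\mu_n(t_0)\to\pi\vert_V^{-1}(\bar\mu(t_0))$ in $\tilde X$. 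More fundamentally, $\mu(t_0)$ is \emph{defined} by propagating the lift of $\bar\mu$ along the whole interval from $q=\mu(0)$; a local argument at $t_0$ near $1$ has no access to which deck translate $hV$ of $V$ the lifted curve actually lands in, so ``$\mu(t_0)\in V$'' is precisely the unproved global statement, not a consequence of confinement of the $\mu_n$ to $V$.

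The paper closes exactly this gap with a different, genuinely global device: since $\tilde X$ is a spherical building of dimension at least $2$, each loop $r_n$ bounds a Lipschitz disc $r_n:D^2\to\tilde X$ with a uniform Lipschitz constant (straighten to uniformly many geodesic arcs and cone off to $q$ inside apartments). One then takes a weak limit $r:D^2\to\tilde X$ of the disc maps. Because $D^2$ is simply connected, this lifted limit always exists and is a continuous (Lipschitz) map, so its restriction $r\vert_{S^1}$ is automatically a closed curve; restricting to the two half-circles identifies it as $\gamma$ followed by $\overline{\gamma'}$, forcing $\gamma'(1)=\gamma(1)=p$. Without some such global filling argument (or an alternative proof that $\bar\mu_n\to\bar\mu$ uniformly in $d^{hor}$, which you do not establish and which is not automatic), the endpoint of the lift is simply not determined by the local behaviour near $t=1$.
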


\begin{proof}
 Let $\gamma _n, \gamma $ be as in the definition of $\mathcal T$ above. Let $r_n:S^1 \to \tilde X$ be the concatenation
of $\gamma _n$ and of the reversed $\gamma_n '$.  Since $\tilde X$ is a spherical building of dimension at least $2$,
$r_n$  can be retracted to a point uniformly, i.e.,  $r_n$ can be extended to some   $L'=L'(L)$-Lipschitz map 
$r_n : D^2 \to \tilde X$  (Straighten $\gamma  ' _n$ to be a broken geodesic with uniformly many geodesic parts, using that
the injectivetiy radius is $\pi$.  Then subdivide $S^1$ into  uniformly finitely many intervals, such that  $q$ and the image of any of  these intervals are contained together in an apartment).  Consider now a weak limit $r:D^2\to \tilde X$ of the sequence $r_n$.
By construction, the left half-circle in $r(S^1)$ is $\gamma$ and the right half-circle in $r(S^1)$  is $\gamma '$.
 \end{proof}

The lemma implies that a sequence cannot converge in $\mathcal T$  to two different points.
Thus $\mathcal T$ is Hausdorff.  Since $\mathcal T$  is separable and sequentially compact it is a compact metrizable topology.
Taking another point $q' \in \tilde X$ and considering concatenations with a fixed geodesic from $q$ to $q'$,
the lemma implies that the topology does not depend on the base point $q$. Thus it is defined only in terms of the projection
$\pi:\tilde X \to M$.  Therefore, it is invariant under the action of $K$.

By construction, a small  metric ball  around any point $x \in \tilde X$ is sent by $\pi$ bijectively  onto a small ball 
in (the exponential image of) the normal space to the leaf through $\pi (x)$. The topology $\mathcal T$ we have defined, restricts to this ball as  the usual Euclidean topology in the normal space.  Thus the intersections of a preimage of a regular leaf and a  small ball  around any point of $\tilde X$  is connected.

Thus we have a compact irreducible building $(\tilde X ,\mathcal T)$, in the sense of \cite{burns}. Since the   preimages of the leaves of $\mathcal F$ (i.e., points of the same type in $\tilde X$, in other words, the set of chambers of the building) contain  non-trivial  connected subsets, the set of chambers    is connected
(\cite{Kramer}).  From \cite{burns} it follows, that  the space $(\tilde X, \mathcal T )$ must be homeomorphic to a sphere.
 Moreover, the building 
is the spherical building of a simple non-compact real Lie group and can be identified with the boundary at infinity of
a non-compact irreducible symmetric space. In particular,  the group of automorphisms of the building acts on the sphere in a linear way.

Consider now the action of $K$ on $\tilde X$.  The orbit of any point is the preimage of a point under the continuous projection $\pi :\tilde X \to M$.  Thus it is a compact set.  The group of topological automorphisms $G$ of the compact building 
$\tilde X$ is locally compact with respect to the compact-open topology (\cite{burns}).  We claim that $K$ is a compact subgroup of $G$.   In fact, take a sequence $g_n \in K$.  Choose a point $p \in \tilde X$. Then there is some $g \in K$,
such that $g_n \cdot p$ converges to $g \cdot p$. We call $h_n =g ^{-1} g_n$ and have $h_n \cdot  p \to p$.
We claim that $h_n$ converges to the identity in $G$.

Choose a shortest geodesic $\gamma _n$ from $p_n = h_n (p)$ to $p$.  Choose now a point $q \in \tilde X$ and a shortest geodesic $\eta$ from $p$ to $q$.  Then $h_n (q)$ is given by the lift starting at $p$ of the projection of the concatenation of
$\gamma _n$ and $\eta$.    These projections converge to a curve which lifts to a curve ending at $q$.  Hence, $q_n$ converges to $q$.   Therefore, $g_n$ converges to $g$. Thus  $K$ is compact.

Thus our compact group $K$ of automorphisms acts freely and linearly  on the sphere $\tilde X$. The projection map
$\pi :\tilde X \to M$ has as fibers the orbits of $K$, hence $M$ is the quotient space  $M=\tilde X/K$.
By assumption, $M$ is simply connected, hence $K$ is connected. The only connected groups that act without fixed points
on a sphere are the trivial group,  $\mathrm U (1)$ and $\mathrm {SU} (2)$. Then the quotient space $M$ is homeomorphic to a sphere,  or  projective space over the complex
or over the quaternions. 

But  only symmetric spaces of rank 1 have the topology of a sphere or of a projective space (cf., \cite{Ziller}).

\section{Polar foliations on symmetric spaces of rank one}  \label{9}
Polar {\it actions} on symmetric spaces of rank one have been studied and classified in \cite{Pth1}. 
The geometric structure of polar foliations on such spaces is not more complicated. The following result is  folklore (cf. \cite{Dominguez}):

\begin{prop}
Let  $M$ be a projective space $F\mathbb P^m$, where $F$ denotes the field of complex or quaternionic numbers.
Let $h: \mathbb S^n  \to M$ be the Hopf fibration from the round sphere. If $\mathcal F$ is a polar foliation
on $M$ then its lift $\hat {\mathcal F} := h^{-1} (\mathcal F )$ is a polar foliation on $\mathbb S^n$. 
\end{prop}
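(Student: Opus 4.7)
The plan is to check the two defining properties of a polar foliation for $\hat{\mathcal{F}} = h^{-1}(\mathcal{F})$ in turn: that it is a singular Riemannian foliation, and that through every point of $\mathbb{S}^n$ passes a totally geodesic section of the correct dimension. Writing $V_x = \ker dh_x$ for the vertical space of $h$ and using a tilde for $h$-horizontal lifts, the fact that $h$ is a Riemannian submersion with totally geodesic fibers yields $T_x\hat{L}(x) = V_x \oplus \widetilde{T_{h(x)}L(h(x))}$ for the leaves $\hat{L}(x) := h^{-1}(L(h(x)))$. Consequently a vector is $\hat{\mathcal{F}}$-horizontal iff it is $h$-horizontal and projects to an $\mathcal{F}$-horizontal vector; in particular $\operatorname{codim}\hat{\mathcal{F}} = \operatorname{codim}\mathcal{F} =: k$.

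For the singular Riemannian property, let $\hat{\gamma}$ be a geodesic in $\mathbb{S}^n$ starting orthogonally to a leaf of $\hat{\mathcal{F}}$. Its initial velocity is $h$-horizontal, so by the standard Riemannian-submersion property $\hat{\gamma}$ stays $h$-horizontal and $h\circ\hat{\gamma}$ is a geodesic of $F\mathbb{P}^m$. That projected geodesic starts $\mathcal{F}$-horizontally, hence stays $\mathcal{F}$-horizontal because $\mathcal{F}$ itself is a singular Riemannian foliation. Lifting back via the characterization above shows $\hat{\gamma}$ remains $\hat{\mathcal{F}}$-horizontal throughout.

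To produce sections, the codimension-one case is automatic since any one-dimensional distribution is integrable. For $k\geq 2$, fix a regular $x$, set $y = h(x)$, choose a section $\Sigma$ of $\mathcal{F}$ through $y$, and let $\hat{\mathfrak{s}} \subset T_x\mathbb{S}^n$ be the $h$-horizontal lift of $T_y\Sigma$. Because $\mathbb{S}^n$ has constant curvature $1$ the curvature tensor preserves every linear subspace, so $\hat{\mathfrak{s}}$ is automatically a Lie triple system and $\hat{\Sigma} := \exp_x(\hat{\mathfrak{s}})$ is a totally geodesic $k$-sphere. For each $\hat{v}\in\hat{\mathfrak{s}}$ the geodesic $t\mapsto \exp_x(t\hat{v})$ stays $h$-horizontal and projects to a geodesic in $\Sigma$ that stays $\mathcal{F}$-horizontal, so the geodesic itself is $\hat{\mathcal{F}}$-horizontal throughout; since $h(\hat{\Sigma}) = \Sigma$, the $k$-sphere $\hat{\Sigma}$ then meets every leaf of $\hat{\mathcal{F}}$.

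The main obstacle lies in verifying that the \emph{full} tangent space $T_z\hat{\Sigma}$ at every $z\in\hat{\Sigma}$ is $h$-horizontal, not just the radial direction. Realizing $\mathbb{S}^n\subset F^{m+1}$, parallel transport in $\mathbb{S}^n$ of a vector orthogonal to a geodesic plane equals the constant transport in $F^{m+1}$, so the horizontality condition reduces to $\langle\hat{v},i\hat{w}\rangle_{\mathbb{R}} = 0$ for all $\hat{v},\hat{w}\in\hat{\mathfrak{s}}$ (and analogously for $j,k$ in the quaternionic case). This is equivalent to $T_y\Sigma$ being totally real with respect to the $F$-structure, equivalently, to the vanishing of the O'Neill tensor of $h$ on $\hat{\mathfrak{s}}$. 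For polar foliations on $F\mathbb{P}^m$ of codimension $\geq 2$ this total reality holds because the Weyl group of $\mathcal{F}$ acts on $\Sigma$ by reflections, and among the totally geodesic submanifolds of $F\mathbb{P}^m$ only the totally real $\mathbb{R}\mathbb{P}^r$ admit reflection groups matching the crystallographic Coxeter structure on $M/\mathcal{F}$; this is the input needed from the classification of polar foliations on rank-one symmetric spaces \cite{Pth1} (cf.\ also \cite{Dominguez}). With $\hat{\mathfrak{s}}$ totally real, $\hat{\Sigma}$ is $h$-horizontal everywhere and is therefore a section of $\hat{\mathcal{F}}$, so $\hat{\mathcal{F}}$ is polar.
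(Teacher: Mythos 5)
Your plan---lift the section $\Sigma$ to a totally geodesic sphere $\hat{\Sigma}\subset \mathbb{S}^n$ and verify that $\hat{\Sigma}$ is a section of $\hat{\mathcal F}$---is a genuinely different route from the paper's. The paper avoids constructing sections upstairs altogether: since $\hat{\mathcal F}$ is a singular Riemannian foliation on a curvature-$1$ sphere with the same quotient $\Delta$, the O'Neill inequality shows that $\hat{\mathcal F}$ is polar exactly when $\Delta$ has constant curvature $1$, and this reduces everything to a single claim about the sections of $\mathcal F$ downstairs, namely that they have curvature $1$, i.e.\ are totally real $\mathbb{RP}^k$. Your direct construction arrives at precisely the same claim, since you show that $\hat\Sigma$ is $h$-horizontal everywhere iff $W=\operatorname{span}(x,\hat{\mathfrak s})$ is $F$-totally real, which is equivalent to $\Sigma$ being a real projective subspace. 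Up to that point your computation (the characterization $T_x\hat L = V_x\oplus\widetilde{T_yL}$, the equivalence between global $h$-horizontality of $\hat\Sigma$ and vanishing of $\langle\hat v,i\hat w\rangle$ on $\hat{\mathfrak s}$, the identification with the O'Neill tensor) is all correct.

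The gap is in how you establish that the sections are totally real. Your assertion that ``among the totally geodesic submanifolds of $F\mathbb P^m$ only the totally real $\mathbb{RP}^r$ admit reflection groups matching the crystallographic Coxeter structure'' is false: the complex line $\mathbb{CP}^1\cong\mathbb{S}^2$, and more generally any totally geodesic subsphere of a projective line $F\mathbb{P}^1$, is a round sphere and supports every spherical crystallographic reflection group of the appropriate rank, so the crystallographic condition does not distinguish curvature $1$ from curvature $4$. Citing ``the classification of polar foliations on rank-one symmetric spaces'' does not close this gap either---\cite{Pth1} classifies polar \emph{actions}, not foliations, and the proposition under discussion is itself a tool for that classification, so the appeal is circular. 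The paper's argument is elementary and self-contained: the only non--totally-real candidates for a section are (sub)spheres of a projective line $F\mathbb{P}^1$; any two distinct projective lines meet in at most a point, never in a set of dimension $1$ or $3$; but a non-regular polar foliation has pairs of sections intersecting in a hypersurface, and a regular polar foliation on $F\mathbb{P}^m$ is excluded by \lref{easyprod} since $F\mathbb{P}^m$ is irreducible. You need some version of this incidence argument to replace the unjustified step; without it the total reality of $\hat{\mathfrak s}$, and hence the existence of $\hat\Sigma$ as a section, is not established.
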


\begin{proof}
We normalize our space, such that the round sphere $\mathbb S^n$ has curvature $1$.
The Hopf fibration $h$ is a Riemannian submersion, hence $\hat {\mathcal F}$ is a singular Riemannian foliation on $\mathbb S^n$,
with the same quotient space $\Delta = M/\mathcal F = \mathbb S^n / \hat {\mathcal F}$. If the dimension $k$ of $\Delta $ is $1$,
then $\hat {\mathcal F}$ is of codimension $1$, hence polar. If $k\geq 2$, then $\hat {\mathcal F}$ is polar if and only if the orbifold $\Delta$ has constant 
curvature $1$.  Thus $\hat {\mathcal F}$ is polar if and only if the sections of $\mathcal F$ have constant curvature $1$.

 The sections of $\mathcal F$ are either spheres or projective spaces of constant curvature.  
 Maximal totally geodesic spheres in $M$
are given by  the  projective lines  $F\mathbb P^1$
 (\cite{Pth1}) .  However, any pair of such projective lines intersect in at most one point and never in a 1-dimensional or $3$-dimensional subset.  Thus if all sections are projective lines, the foliation must be regular. This contradicts \lref{easyprod}.

Hence all sections of $\mathcal F$ are real projective spaces $\mathbb {R P}^k$.
 But such projective spaces have curvature $1$ (cf. \cite{Pth1}). 
\end{proof}

The same proof as above shows:

\begin{prop}
Let $\mathcal F$ be a polar foliations on  the Cayley projective plane.  Then either  $\mathcal F$  has codimension  $1$
or the sections of $\mathcal F$ are real projective subspaces $\mathbb {R P}^2$ and $\mathcal F$ has codimension $2$. 
\end{prop}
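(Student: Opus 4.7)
The plan is to mimic the proof of the preceding proposition, but to work directly inside $\mathrm{Ca}\mathbb P^2$, since no Hopf fibration from a round sphere to the Cayley plane is available as a Riemannian submersion. If the codimension $k$ of $\mathcal F$ equals $1$ there is nothing to show, so I assume $k\geq 2$. Under the standing assumption of Section \ref{5} the sections of $\mathcal F$ have constant curvature $1$, so each section is a totally geodesic submanifold of $\mathrm{Ca}\mathbb P^2$ of constant positive sectional curvature and is therefore either a round sphere $\mathbb S^k$ or a real projective space $\mathbb{RP}^k$.

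The first step is to rule out spherical sections. By the classical list of totally geodesic submanifolds of the Cayley projective plane (cf.\ \cite{Pth1}), every totally geodesic round sphere of dimension $\geq 2$ in $\mathrm{Ca}\mathbb P^2$ is one of the subfield projective lines $\mathbb{CP}^1\cong\mathbb S^2$, $\mathbb{HP}^1\cong\mathbb S^4$, or $\mathrm{Ca}\mathbb P^1\cong\mathbb S^8$, and any two distinct such lines of the same type meet in at most one point. On the other hand, the dichotomy recalled in Section \ref{2} says that either $\mathcal F$ is regular, or some pair of sections $\Sigma_1,\Sigma_2$ meet in a common hypersurface of each, which would have dimension $k-1\geq 1$ and contradict the intersection bound. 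Hence $\mathcal F$ would have to be regular; but then \lref{easyprod} would force $\mathrm{Ca}\mathbb P^2$ to split nontrivially as a Riemannian product, contradicting its irreducibility.

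Therefore the sections must be real projective spaces $\mathbb{RP}^k$. Invoking the same classification once more, the maximal totally geodesic $\mathbb{RP}^j$ inside $\mathrm{Ca}\mathbb P^2$ has $j=2$, so $k\leq 2$; combined with $k\geq 2$ this gives $k=2$ and the sections are $\mathbb{RP}^2$, as claimed.

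The main obstacle is the external input from the geometry of the exceptional symmetric space: the identification of all totally geodesic spheres of dimension $\geq 2$ in $\mathrm{Ca}\mathbb P^2$ with subfield projective lines, and the bound $j\leq 2$ for totally geodesic $\mathbb{RP}^j$. Both facts are standard (Wolf; Chen--Nagano; \cite{Pth1}) and together substitute here for the Hopf-fibration lifting trick used in the previous proposition.
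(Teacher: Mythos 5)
Your proposal follows exactly the route the paper intends: the paper's ``proof'' of this proposition is literally the one line ``The same proof as above shows,'' and you transplant the argument from the previous proposition (rule out spherical sections via the intersection pattern of maximal totally geodesic spheres, then use \lref{easyprod} to exclude the regular case, then identify the real projective sections) while adding the one extra observation needed here, namely that the largest totally geodesic $\mathbb{RP}^j$ in $\mathrm{Ca}\mathbb{P}^2$ is $\mathbb{RP}^2$, which forces $k=2$.

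One small imprecision worth flagging: you assert that \emph{every} totally geodesic round sphere of dimension at least $2$ in $\mathrm{Ca}\mathbb{P}^2$ is a subfield projective line. As stated this is false --- a great $\mathbb{S}^3$ inside $\mathbb{HP}^1\cong\mathbb{S}^4$ is also totally geodesic --- and the paper is more careful, referring to \emph{maximal} totally geodesic spheres. The distinction matters in principle, because two proper great subspheres of a single projective line \emph{can} meet in a hypersurface, so the dichotomy would not immediately yield a contradiction; one has to know (or argue) that a section which is a sphere must be maximal, i.e.\ one of the projective lines. The paper glosses over this in the same way in the preceding proposition, so your proof is faithful to the source, but if you want a self-contained argument you should either cite the precise statement from \cite{Pth1} that spherical sections are projective lines, or observe that two distinct sections sharing a hypersurface generate a larger totally geodesic constant-curvature sphere and iterate until you land on a maximal one.
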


Combining the above propositions with the results of Section \ref{7} and  Section \ref{8}, we finish the proof
of \tref{mainthm} and \tref{irredthm}.

\bigskip

\noindent\textbf{Acknowledgements} For helpful discussions and useful comments on preliminary versions of the paper, I am  very grateful to Karsten Grove, Marco Radeschi, Gudlaugur Thorbergsson  and  Stephan Wiesendorf.

\bibliographystyle{alpha}
\bibliography{symmetric}

\end{document}